\title{Greedy Randomized and Maximal Weighted Residual Kaczmarz Methods with Oblique Projection}
\author{Fang Wang, Weiguo Li$^*$, Wendi Bao, Li Liu\\
\small China University of Petroleum\\[-0.8ex]
\small Qingdao, China\\
\small\tt $^*$liwg@upc.edu.cn\\
}
\date{June 2021}
\newtheorem{theorem}{Theorem}
\newtheorem{lemma}{Lemma}
\newtheorem{example}{Example}
\newenvironment{breakablealgorithm}
  {
   \begin{center}
     \refstepcounter{algorithm}
     \hrule height.8pt depth0pt \kern2pt
     \renewcommand{\caption}[2][\relax]{
       {\raggedright\textbf{\ALG@name~\thealgorithm} ##2\par}%
       \ifx\relax##1\relax 
         \addcontentsline{loa}{algorithm}{\protect\numberline{\thealgorithm}##2}%
       \else 
         \addcontentsline{loa}{algorithm}{\protect\numberline{\thealgorithm}##1}%
       \fi
       \kern2pt\hrule\kern2pt
     }
  }{
     \kern2pt\hrule\relax
   \end{center}
  }
\begin{document}
\bibliographystyle{plain}
\maketitle
\begin{abstract}
For solving large-scale consistent linear system, we combine two efficient row index selection strategies with Kaczmarz-type method with oblique projection, and propose a greedy randomized Kaczmarz method with oblique projection (GRKO) and the maximal weighted residual Kaczmarz method with oblique projection (MWRKO) .  Through those method, the number of iteration steps and running time can be reduced to a greater extent to find the least-norm solution, especially when the rows of matrix A are close to linear correlation. Theoretical proof and numerical results show that GRKO method and MWRKO method are more effective than greedy randomized Kaczmarz method and maximal weighted residual Kaczmarz method respectively.

{\bf Key words:}oblique projection, convergence property,  Kaczmarz method,  correlation,  large linear system.
\end{abstract}
\section{Introduction}\label{sec1}
Consider to solve a large-scale consistent linear system
\begin{align}
Ax=b,\label{Ax=b}
\end{align}
where the matrix $A\in R^{m\times n}$, $b\in R^m$. One of the solutions of the system (\ref{Ax=b}) is $x^*=A^\dag b$, which is the least Euclidean norm solution. Especially, when the coefficient matrix A is full column rank, $x^*$ is the unique solution of the system (\ref{Ax=b}).

There are many researches on solving the system (\ref{Ax=b}) through iterative methods, among which the Kaczmarz method is a representative and efficient row-action method. The Kaczmarz method \cite{K37} selects the rows of the matrix $A$ by using the cyclic rule, and in each iteration, the current iteration point is orthogonally projected onto the corresponding hyperplane. Due to its simplicity and performance, the Kaczmarz method has been applied to many fields, such as computerized tomography \cite{WH09,SH14}, image reconstruction \cite{DJ18,TD14,RM12,GV14}, distributed computing \cite{AD17}, and signal processing \cite{VN16,WH09,SH14}; and so on \cite{JJ17,HR94,XV02,XJ93}. Since the Kaczmarz method cycles through the rows of $A$, the performance may depend heavily on the ordering of these rows. A poor ordering may result in a very slow convergence rate.  McCormick \cite{S77} proposed a maximal weighted residual Kaczmarz (MWRK) method, and proved its convergence. In recent work, a new theoretical convergence estimate was proposed for the MWRK method in \cite{DH19}. Strohmer and Vershynin \cite{SV09} proposed a randomized Kaczmarz (RK) method which selects a given row with proportional to the Euclidean norm of the rows of the coefficient matrix $A$, and proved its convergence. After the above work, research on the Kaczmarz-type methods was reignited recently, see for example, the randomized block Kaczmarz-type methods \cite{DJ14,DR15,I19}, the greedy version of Kaczmarz-type methods \cite{YC19,ZJ19,XY21,BWW18,BBW18}, the extended version of Kaczmarz-type methods \cite{MN15,BW199}, and many others \cite{JS16,GL20,DH19,PP12,NW13}.  Kaczmarz's research also accelerated the development of column action iterative methods represented by the coordinate descent method \cite{D10}. See \cite{ZG20,W15,RT14,NS17,NN17,LX15,CH08,BW19}, etc.

Recently, Bai and Wu \cite{BWW18} proposed a new randomized row index selection strategy, which is aimed at grasping larger entries of the residual vector at each iteration, and constructed a greedy randomized Kaczmarz (GRK) method. They proved that the convergence of the GRK method is faster than that of the RK method. Due to its greedy selection strategy for row index, a large number of greedy versions of Kaczmarz work have been developed and studied.
At present, a lot of work is based on Kaczmarz's theory of orthogonal projection. In \cite{Popa12,PP12}, Constantin Popa gives the definition of oblique projection, which breaks the limitation of orthogonal projection. Therefore, in this paper, we propose a new descent direction based on the definition of oblique projection, which can guarantee the two entries of residual error to be zero during iteration, so as to accelerate convergence. Based on the row index selection rules of two representative randomized and non-randomized Kaczmarz-type methods -- the GRK method and the MWRK method, we propose two new Kaczmarz-type methods with oblique projection (KO-type) -- the GRKO method and the MWRKO method respectively, and their convergence is proved theoretically and numerically. We emphasize the efficiency of our proposed methods when the rows of the matrix $A$ are nearly linearly correlated, and find that Kaczmarz-type method based on orthogonal projection performed poorly when applied to this kind of matrices.

The organization of this paper is as follows. In Section 2, we introduce the KO-type method, and give its two lemmas. In Section 3, we propose the GRKO method and MWRKO method naturally and prove the convergence of the two methods. In Section 4, some numerical examples are provided to illustrate the efficiency of our new methods. Finally, some brief concluding remarks are described in Section 5.

In this paper, $\langle \cdot \rangle$ stands for the scalar product. $\|x\|$ is the Euclid norm of $x\in R^n$. For a given matrix $G=(g_{ij})\in R^{m\times n}$, $g_i^T$, $G^T$, $G^\dag$, $R(G)$, $N(G)$  ,$\|G\|_F$ and $\lambda_{min}(G)$, are used to denote the ith row, the transpose, the Moore-Penrose pseudoinverse \cite{Be74}, the range space, the null space, the Frobenius norm, and the smallest nonzero eigenvalue of $G$ respectively. $P_C(x)$ is the orthogonal projection of $x$ onto $C$, $\tilde{x}$ is any solution of the system (\ref{Ax=b}); $x^*=A^{\dag}b$ is the least-norm solution of the system (\ref{Ax=b}). Let $E_k$ denote the expected value conditonal on the first k iterations, that is,
$$E_k[\cdot]=E[\cdot|j_0,j_1,...,j_{k-1}],$$
where $j_s(s=0,1,...,k-1)$ is the column chosen at the sth iteration.
\section{Kaczmarz-type Method with Oblique Projection and its Lemmas}
The sets $H_i=\left\{x\in R^n,\langle a_i,x\rangle=b_i \right\}\ (i=1,2,\cdots,m)$ are the hyperplanes which associated to the $i$th equation of the system (\ref{Ax=b}) . To project the current iteration point $x^{(k)}$ to one of the hyperplanes, the oblique projection \cite{Popa12,PP12} can be expressed as follows:
\begin{align}
\label{oblique}
x^{(k+1)}=P_{H_{i}}^d(x^{(k)})=x^{(k)}-\frac{\langle a_i,x\rangle-b_i}{\langle d,a_i\rangle}d,
\end{align}
where $d\in R^n$ is a given direction. In Figure \ref{figure1}, $x^{(k+1)}$ is obtained by oblique projection of the current iteration point $x^{(k)}$ to the hyperplane $H_{i_{k+1}}$ along the direction $d$, i.e. $x^{(k+1)}=P_{H_{i_{k+1}}}^d(x^{(k)})$. $y^{(k+1)}$ is the iteration point obtained when the direction $d=a_{i_{k+1}}$, i.e. $y^{(k+1)}=P_{H_{i_{k+1}}}^{a_{i_{k+1}}}(x^{(k)})$. When the direction $d=a_i\,(i=mod(m,k)+1)$, it is the classic Kaczmarz method. However, when the hyperplanes are close to linear parallel, the Kaczmarz method based on orthogonal projection has a slow iteration speed.  In this paper, we propose  a new iteration direction $d=w^{(i_k)}=a_{i_{k+1}}-\frac{\langle a_{i_{k}}, a_{i_{k+1}}\rangle}{\|a_i\|^2}a_{i_{k}}$, to make the current iteration point approach to the intersection of two hyperplanes, i.e. $z^{(k+1)}=P_{H_{i_{k+1}}}^{w^{(i_{k})}}(x^{(k)})$.
\begin{figure}
  \centering
  \includegraphics[width=4.5in]{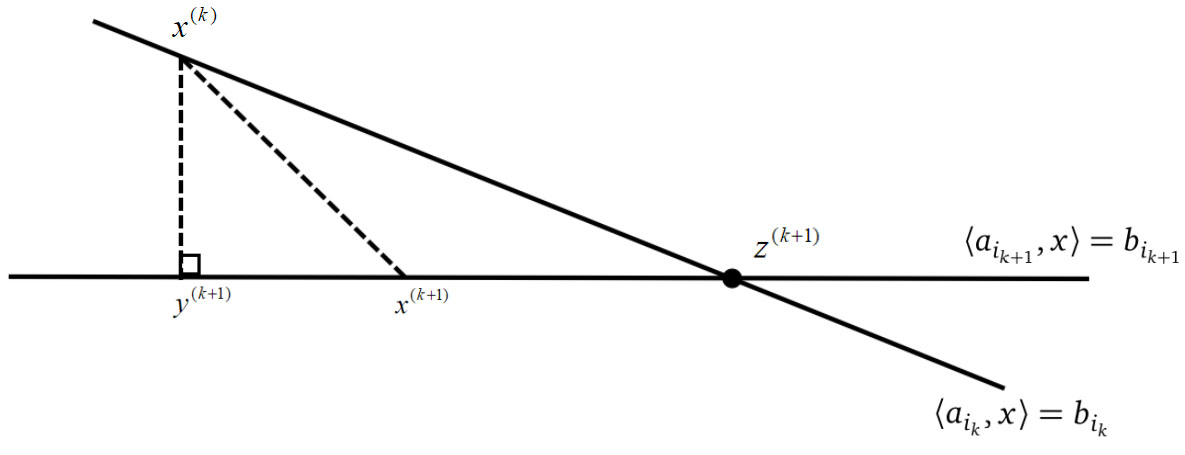}\\
  \caption{Oblique projection in different directions $d$}\label{figure1}
\end{figure}

The framework of KO-type mthod is given in Section 2.1.
\begin{algorithm}
\label{KO method}
  \leftline{\caption{Kaczmarz-type Method with Oblique Projection}}
  \begin{algorithmic}[1]
    \Require
      $A\in R^{m\times n}$, $b\in R^{m}$, $x^{(0)}\in R^n$, $K$, $\varepsilon>0$
    \State For $i=1:m$, $M(i)=\|a_i\|^2$
    \State Choose $i_{1}$ based on a certain selection rule
    \State Compute $x^{(1)}=x^{(0)}+\frac{b_{i_{1}}-\langle a_{i_{1}}, x^{(0)}\rangle}{M(i_{1})}a_{i_{1}}$
    \For {$k=1,2,\cdots, K$}
    \State Choose $i_{k+1}$ based on a certain selection rule
    \State Compute $D_{i_k}=\langle a_{i_{k}}, a_{i_{k+1}}\rangle$ \ and $r_{i_{k+1}}^{(k)}=b_{i_{k+1}}-\langle a_{i_{k+1}}, x^{(k)}\rangle$
    \State Compute $w^{(i_k)}=a_{i_{k+1}}-\frac{D_{i_k}}{M(i_k)}a_{i_{k}}$ \ and \ $h_{i_k}(=\|w^{(i_k)}\|^2)=M(i_{k+1})-\frac{D_{i_k}}{M(i_k)}D_{i_k}$
    \State $\alpha_{i_k}^{(k)}=\frac{r_{i_{k+1}}^{(k)}}{h_{i_k}}$ \ and $x^{(k+1)}=x^{(k)}+\alpha_{i_k}^{(k)} w^{({i_k})}$
    \EndFor
    \State Output $x^{(K+1)}$
  \end{algorithmic}
\end{algorithm}

  We will give two lemmas of KO-type method. The selection rule of its row index $i_{k+1}$ does not affect the lemmas.
\begin{lemma}
\label{lemma2}
For the Kaczmarz-type method with oblique projection, the residual satisfies the following equations:
\begin{equation}
r_{i_{k}}^{(k)}=0\quad(\forall k > 0),\label{rikk}
\end{equation}

\begin{equation}
 r_{i_{k-1}}^{(k)}=0\quad(\forall k > 1).\label{rik-1k}
\end{equation}
\end{lemma}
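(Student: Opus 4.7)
The plan is to establish both identities by direct calculation from the update rule $x^{(k+1)} = x^{(k)} + \alpha_{i_k}^{(k)} w^{(i_k)}$, exploiting two key scalar products of the search direction. By design, $w^{(i_k)} = a_{i_{k+1}} - \frac{D_{i_k}}{M(i_k)} a_{i_k}$ is oblique-orthogonal to the previously chosen row, so $\langle a_{i_k}, w^{(i_k)} \rangle = 0$, while a short expansion gives $\langle a_{i_{k+1}}, w^{(i_k)} \rangle = M(i_{k+1}) - D_{i_k}^2 / M(i_k) = h_{i_k}$. Combined with the tuned step size $\alpha_{i_k}^{(k)} = r_{i_{k+1}}^{(k)} / h_{i_k}$, these two identities dictate exactly which components of the residual change and which remain invariant after the update.

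For \eqref{rikk}, I would first check the base case $k = 1$ separately, since $x^{(1)}$ is produced by the classical orthogonal-projection formula in line~3 of the algorithm; substituting directly and using $\langle a_{i_1}, a_{i_1} \rangle = M(i_1)$ yields $r_{i_1}^{(1)} = 0$ in one line. For $k \geq 2$, I would apply the main update $x^{(k)} = x^{(k-1)} + \alpha_{i_{k-1}}^{(k-1)} w^{(i_{k-1})}$ inside $r_{i_k}^{(k)} = r_{i_k}^{(k-1)} - \alpha_{i_{k-1}}^{(k-1)} \langle a_{i_k}, w^{(i_{k-1})} \rangle$, compute $\langle a_{i_k}, w^{(i_{k-1})} \rangle = M(i_k) - D_{i_{k-1}}^2 / M(i_{k-1}) = h_{i_{k-1}}$, and observe that the definition of $\alpha_{i_{k-1}}^{(k-1)}$ makes the two terms cancel exactly.

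For \eqref{rik-1k}, I would use \eqref{rikk} as an input. Starting once more from the update, $r_{i_{k-1}}^{(k)} = r_{i_{k-1}}^{(k-1)} - \alpha_{i_{k-1}}^{(k-1)} \langle a_{i_{k-1}}, w^{(i_{k-1})} \rangle$, and the inner product here evaluates to $D_{i_{k-1}} - (D_{i_{k-1}}/M(i_{k-1})) \cdot M(i_{k-1}) = 0$ by the oblique-orthogonality noted above. Hence $r_{i_{k-1}}^{(k)} = r_{i_{k-1}}^{(k-1)}$, which is zero by \eqref{rikk} applied at step $k-1$. I do not anticipate a substantive obstacle: the lemma is essentially the algebraic counterpart of the geometric picture in Figure~\ref{figure1}, namely that projecting along $w^{(i_k)}$ both zeroes out the newly selected component of the residual and preserves the previously zeroed one. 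The only care required is index bookkeeping and treating the $k = 1$ base case on its own, since the initial iterate is produced by a different formula than the main oblique-projection update.
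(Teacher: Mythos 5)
Your proposal is correct and follows essentially the same route as the paper's proof: the base case $k=1$ via the orthogonal-projection formula, the cancellation $\alpha_{i_{k-1}}^{(k-1)}\langle a_{i_k},w^{(i_{k-1})}\rangle = r_{i_k}^{(k-1)}$ for \eqref{rikk}, and the combination of $\langle a_{i_{k-1}},w^{(i_{k-1})}\rangle=0$ with \eqref{rikk} for \eqref{rik-1k}. No gaps.
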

\begin{proof}
From the definition of the KO-type method, for $k=1$, we have
\begin{align*}
r^{(1)}_{i_{1}}&=b_{i_{1}}-\langle a_{i_{1}},x^{(1)}\rangle\\
&=b_{i_{1}}-\langle a_{i_{1}},x^{(0)}+\frac{b_{i_{1}}-\langle a_{i_{1}},x^{(0)}\rangle}{M(i_1)}a_{i_{1}}\rangle\\
&=0.
\end{align*}
For $k>1$, we have
\begin{align*}
r^{(k)}_{i_{k}}&=b_{i_k}-\langle a_{i_{k}},x^{(k)} \rangle\\
&=b_{i_{k}}-\langle a_{i_{k}},x^{(k-1)}+\alpha^{(k-1)}_{i_{k-1}}w^{(i_{k-1})} \rangle\\
&=b_{i_k}-\langle a_{i_{k}},x^{(k-1)} \rangle-\alpha^{(k-1)}_{i_{k-1}}\langle a_{i_k},w^{(i_{k-1})}\rangle\\
&=r^{(k-1)}_{i_k}-\frac{r^{(k-1)}_{i_{k}}}{h_{i_{k-1}}}\langle a_{i_{k}},w^{(i_{k-1})}\rangle\\
&=r^{(k-1)}_{i_k}-r^{(k-1)}_{i_k}\\
&=0.
\end{align*}
The fifth equality holds due to $\langle a_{i_{k}},w^{(i_{k-1})}\rangle=||a_{i_{k}}||^2-\frac{\langle a_{i_{k-1}},a_{i_{k}}\rangle^2}{||a_{i_{k-1}}||^2}=h_{i_{k-1}}$.
Thus, the equation (\ref{rikk}) holds.

Since $k>1$, $$x^{(k)}=x^{(k-1)}+\alpha^{(k)}_{i_{k-1}}w^{(i_{k-1})}.$$\\
We get $$(b-Ax^{(k)})_{i_{k-1}}=(b-Ax^{(k-1)})_{i_{k-1}}-(A\alpha^{(k)}_{i_{k-1}}w^{(i_{k-1})})_{i_{k-1}},$$
that is,
\begin{align*}
r^{(k)}_{i_{k-1}}&=r_{i_{k-1}}^{(k-1)}-\alpha^{(k)}_{i_{k-1}}\langle a_{i_{k-1}},w^{(i_{k-1})}\rangle\\
&=\alpha^{(k)}_{i_{k-1}}\langle a_{i_{k-1}},a_{i_{k}}-\frac{\langle a_{i_{k-1}},a_{i_{k}}\rangle}{||a_{i_{k-1}}||^2}a_{i_{k-1}}\rangle\\
&=0.
\end{align*}
The second equality holds due to the equation (\ref{rikk}). Thus, the equation (\ref{rik-1k}) holds.
\end{proof}
\begin{lemma}
The iteration sequence $\left\{x^{(k)}\right\}^\infty_{k=0}$ generated by the Kaczmarz-type method with oblique projection, satisifies the following equations:
\begin{equation}
||x^{(k+1)}-\tilde{x}||^2=||x^{(k)}-\tilde{x}||^2-||x^{(k+1)}-x^{(k)}||^2\quad(\forall k \geq 0),\label{xk+1-x}
\end{equation}
where $\tilde{x}$ is an arbitrary solution of the system (\ref{Ax=b}). Especially, when $P_{N(A)}(x^{(0)})=P_{N(A)}(\tilde{x})$, $x^{(k)}-\tilde{x}\in R(A^T)$.
\end{lemma}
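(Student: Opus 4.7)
The plan is to reduce the Pythagorean identity to a single orthogonality statement and then harvest that statement directly from Lemma~\ref{lemma2}; the second claim will then follow by a one-line induction on the structure of the update direction.

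First, I would rewrite (\ref{xk+1-x}) in the equivalent form $\langle x^{(k+1)}-\tilde x,\; x^{(k+1)}-x^{(k)}\rangle = 0$, obtained by expanding $\|x^{(k)}-\tilde x\|^2 = \|(x^{(k+1)}-\tilde x) - (x^{(k+1)}-x^{(k)})\|^2$ and cancelling like terms. This turns the geometric identity into a single question: is the step orthogonal to the new error?

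For $k\ge 1$ the step is $x^{(k+1)}-x^{(k)} = \alpha^{(k)}_{i_k} w^{(i_k)}$ with $w^{(i_k)} = a_{i_{k+1}} - \frac{D_{i_k}}{M(i_k)}\,a_{i_k}$, so the inner product splits into two pieces along $a_{i_{k+1}}$ and $a_{i_k}$. Since $A\tilde x = b$, these two pieces equal $-r^{(k+1)}_{i_{k+1}}$ and $-r^{(k+1)}_{i_k}$ respectively, both of which vanish by Lemma~\ref{lemma2}: the first by (\ref{rikk}), and the second by (\ref{rik-1k}) after shifting the index $k \mapsto k+1$. The $k=0$ case is even simpler, since $x^{(1)}-x^{(0)}$ is a multiple of $a_{i_1}$ and $r^{(1)}_{i_1}=0$ by (\ref{rikk}).

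For the $R(A^T)$ claim I would argue by induction on $k$. The hypothesis $P_{N(A)}(x^{(0)}) = P_{N(A)}(\tilde x)$ is equivalent to $x^{(0)}-\tilde x \in N(A)^\perp = R(A^T)$. At step $k=1$ the increment is a scalar multiple of $a_{i_1}$, and at every subsequent step the increment $\alpha^{(k)}_{i_k}w^{(i_k)}$ is by construction a linear combination of the rows $a_{i_k}$ and $a_{i_{k+1}}$, hence lies in $R(A^T)$. Adding such an increment to $x^{(k)}-\tilde x\in R(A^T)$ preserves membership in $R(A^T)$, closing the induction.

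The main obstacle, if any, is purely bookkeeping: the residual identities in Lemma~\ref{lemma2} are stated in terms of $i_k$ and $i_{k-1}$, so one must carefully shift indices to recognize that $r^{(k+1)}_{i_k}=0$ is precisely (\ref{rik-1k}). Once that is matched up, nothing else in the argument requires calculation beyond the straight expansion of one squared norm.
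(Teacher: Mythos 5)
Your proposal is correct and follows essentially the same route as the paper: reduce the Pythagorean identity to the orthogonality $\langle x^{(k+1)}-\tilde x,\,x^{(k+1)}-x^{(k)}\rangle=0$ and then deduce the $R(A^T)$ claim from the fact that every increment is a linear combination of rows of $A$. The only (harmless) cosmetic difference is in verifying that orthogonality: the paper expands $x^{(k+1)}=x^{(k)}+\alpha^{(k)}_{i_k}w^{(i_k)}$ and uses only (\ref{rikk}) together with $h_{i_k}=\|w^{(i_k)}\|^2$, whereas you evaluate the inner product at step $k+1$ and invoke both (\ref{rikk}) and (\ref{rik-1k}) after the index shift, which is equally valid.
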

\begin{proof}
For $k=0$, we have
\begin{align*}
\langle a_{i_{1}},x^{(1)}-\tilde{x}\rangle&=\langle a_{i_{1}},x^{(0)}-\tilde{x}+\frac{b_{i_{1}}-\langle a_{i_{1}},x^{(0)}\rangle}{M(i_1)}a_{i_{1}}\rangle\\
&=\langle a_{i_{1}},x^{(0)}\rangle-b_{i_{1}}+\langle a_{i_{1}},\frac{b_{i_{1}}-\langle a_{i_{1}},x^{(0)}\rangle}{M(i_1)}a_{i_{1}}\rangle\\
&=0,
\end{align*}
which shows that $x^{(1)}-\tilde{x}$ is orthogonal to $a_{i_{1}}$. Therefore, we know
\begin{align*}
(x^{(1)}-x^{(0)})^T(x^{(1)}-\tilde{x})=0.
\end{align*}
It follows that
\begin{align}
||x^{(1)}-\tilde{x}||^2=||x^{(0)}-\tilde{x}||^2-||x^{(1)}-x^{(0)}||^2.\label{x1-x0}
\end{align}
For $k>0$, we have
\begin{align*}
\langle w^{(i_{k})},x^{(k+1)}-\tilde{x}\rangle&=\langle w^{(i_{k})},x^{(k)}-\tilde{x}+\alpha^{(k)}_{i_{k}}w^{(i_{k})}\rangle\\
&=\langle a_{i_{k+1}}-\frac{D_{i_{k}}}{M(i_{k})}a_{i_{k}},x^{(k)}-\tilde{x}\rangle+\langle w^{(i_k)},\frac{r_{i_{k+1}}^{(k)}}{h_{i_{k}}}w^{(i_{k})}\rangle\\
&=-r^{(k)}_{i_{k+1}}+\frac{D_{i_{k}}}{M(i_{k})}r_{i_{k}}^{(k)}+r_{i_{k+1}}^{(k)}\\
&=0.
\end{align*}
The third  and last equalities hold due to $h_{i_{k}}=\|w^{(i_{k})}\|^2$, and the equation (\ref{rikk}) respectively. Thus we get that $x^{(k+1)}-\tilde{x}$ is orthogonal to $w^{(i_{k})}$. Therefore, we get that
\begin{align*}
(x^{(k+1)}-x^{(k)})^T(x^{(k+1)}-\tilde{x})=0.
\end{align*}
It follows that
\begin{align}
||x^{(k+1)}-\tilde{x}||^2=||x^{(k)}-\tilde{x}||^2-||x^{(k+1)}-x^{(k)}||^2\quad(\forall k > 0).\label{xk+1-xk}
\end{align}

Thus, from the above proof, the equation (\ref{xk+1-x}) holds.

According to the iterative formula
$$
\left\{
\begin{aligned}
x^{(1)}&=x^{(0)}+\frac{b_{i_{1}}-\langle a_{i_{1}},x^{(0)}\rangle}{M(i_1)}a_{i_{1}},\\
x^{(k+1)}&=x^{(k)}+\alpha^{(k)}_{i_{k}}w^{(i_{k})}\quad(\forall k>0),\\
\end{aligned}
\right.
$$
we can get $P_{N(A)}(x^{(k)})=P_{N(A)}(x^{(k-1)})=...=P_{N(A)}(x^{(0)}),$
and by the fact that $P_{N(A)}(x^{(0)})=P_{N(A)}(\tilde{x})$, we can deduce that $x^{(k)}-\tilde{x} \in R(A^T)$.\\
\end{proof}

\section{Greedy Randomized and Maximal Weighted Residual Kaczmarz methods with Oblique Projection}
In this section, we combine the oblique projection with the GRK method \cite{BWW18} and the MWRK method \cite{S77} to obtain the GRKO method and the MWRKO method, and prove their convergence. Theoretical results show that the KO-type method can accelerate the convergence when there are suitable row index selection strategies.
\subsection{Greedy Randomized Kaczmarz Method with Oblique projection}
The core of the GRK method \cite{BWW18} is a new probability criterion, which can grasp the large items of the residual vector in each iteration, and randomly select the item with probability in proportion to the retained residual norm. Theories and experiments prove that it can speed up convergence speed. This paper uses its the row index selection rule in combination with the KO method to obtain the GRKO method, and the algorithm is as follows:
\begin{breakablealgorithm}
\caption{Greedy Randomized Kaczmarz Method with Oblique Projection}
  \label{GRKO}
  \begin{algorithmic}[1]
    \Require
      $A\in R^{m\times n}$, $b\in R^{m}$, $x^{(0)}\in R^n$, $K$, $\varepsilon>0$
    \State For $i=1:m$, $M(i)=\|a_i\|^2$
    \State Randomly select $i_1$,and compute $x^{(1)}=x^{(0)}+\frac{b_{i_1}-\langle a_{i_1},x^{(0)} \rangle}{M(i_1)}$$a_{i_1}$
\State \textbf{for} $k=1,2,...,K-1$ \textbf{do}
\State \ \ \ \ Compute $\varepsilon _k=\frac{1}{2}\left(\frac{1}{||b-Ax^{(k)}||^2}\mathop{max}\limits_{1\leq i_{k+1} \leq m}\left\{ \frac{|b_{i_{k+1}}-\langle a_{i_{k+1}},x^{(k)} \rangle|^2 }{||a_{i_{k+1}}||^2} \right\}+\frac{1}{||A||^2_F}\right)$
\State \ \ \ \ Determine the index set of positive integers
 $$\mathcal{U}_k=\left\{ i_{k+1}| |b_{i_{k+1}}-\langle a_{i_{k+1}},x^{(k)}\rangle|^2 \ge \varepsilon _k||b-Ax^{(k)}||^2||a_{i_k+1}||^2 \right\} $$
\State \ \ \ \ Compute the ith entry $\tilde{r}_i^{(k)}$ of the vector $\tilde{r}^{(k)}$ according to
$$\tilde{r}_i^{(k)}=
\begin{cases}
b_i-\langle a_i,x^{(k)}\rangle,  & \mbox{if }i \in \mathcal{U}_k\\
 0 & \mbox{otherwise }
\end {cases}$$
\State \ \ \ \ \  Select $i_{k+1}  \in \mathcal{U}_k$  with probability $Pr(row=i_{k+1})= \frac{|\tilde{r}_{i_{k+1}}^{(k)}|^2}{||\tilde{r}^{(k)}||^2}$
\State  \ \ \ \ \ Compute $D_{i_{k}}=\langle a_{i_k},a_{i_{k+1}}\rangle$
\State \ \ \ \ Compute $w^{(i_k)}=a_{i_{k+1}}-\frac{D_{i_k}}{M(i_k)}a_{i_{k}}$  and  $$h_{i_k}(=\|w^{(i_k)}\|^2)=M(i_{k+1})-\frac{D_{i_k}}{M(i_k)}D_{i_k}$$
\State \ \ \ \ \ $\alpha_{i_k}^{(k)}=\frac{\tilde{r}_{i_{k+1}}^{(k)}}{h_{i_k}}\left( =\frac{r^{(k)}_{i_{k+1}}}{h_{i_{k}}}\right)$ \ and $x^{(k+1)}=x^{(k)}+\alpha_{i_k}^{(k)} w^{({i_k})}$
\State \textbf{end for}
\State Output $x^{(K)}$
\end{algorithmic}
\end{breakablealgorithm}
The convergence of the GRKO method is provided as follows.
\begin{theorem}
Consider the consistent linear system (\ref{Ax=b}), where the coefficient matrix $A\in R^{m\times n}$, $b \in R^m$. Let $x^{(0)}\in R^{n}$ be an arbitrary initial approximation , $\tilde{x}$ is a solution of system (\ref{Ax=b}) such that $P_{N(A)}(\tilde{x})=P_{N(A)}(x^{(0)})$. Then the iteration sequence$ \left\{x^{(k)}\right\}^{\infty}_{k=1}$ generated by the GRKO method obeys
\begin{align}
\label{E}
E||x^{(k)}-\tilde{x}||^2\leq \mathop{\Pi}\limits_{s=0}^{k-1}\zeta_s||x^{(0)}-\tilde{x}||^2.
\end{align}
where $\zeta_0=1-\frac{(\lambda_{min}(A^TA))}{m||A||^2_F},$
$\zeta_1=1-\frac{1}{2}(\frac{1}{\gamma_1}||A||^2_F+1)\frac{\lambda_{min}(A^TA)}{\Delta\cdot||A||^2_F},$
$\zeta_k=1-\frac{1}{2}(\frac{1}{\gamma_2}||A||^2_F+1)\frac{\lambda_{min}(A^TA)}{\Delta\cdot||A||^2_F}\quad(\forall k > 1)$, which
\begin{align}
\label{gamma1}
\gamma_1=\mathop{max}\limits_{\substack{1\leq i\leq m}}\sum\limits_{\substack{s=1\\s\neq i}}^{m}||a_s||^2,
\end{align}
\begin{align}
\label{gamma2}
\gamma_2=\mathop{max}\limits_{\substack{1\leq i,j\leq m\\i\neq j}}\sum\limits_{\substack{s=1\\s\neq i,j}}^{m}||a_s||^2,
\end{align}
\begin{align}
\label{delta}
\Delta=\mathop{max}\limits_{\substack{j\neq k}}sin\langle a_j,a_k\rangle^2(\in(0,1]).
\end{align}

 In addition, if $x^{(0)}\in R(A^T)$, the sequence $ \left\{x^{(k)}\right\}^{\infty}_{k=1}$ converges to the least-norm solution of the system (\ref{Ax=b}), i.e. $\lim\limits_{k\rightarrow\infty}x^{(k)}=x^*=A^\dag b$.
\end{theorem}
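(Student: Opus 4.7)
The plan is to iterate the Pythagorean-type identity from Lemma~2, namely
\[
 \|x^{(k+1)}-\tilde x\|^2=\|x^{(k)}-\tilde x\|^2-\|x^{(k+1)}-x^{(k)}\|^2,
\]
take conditional expectations, and bound the step length $E_k\|x^{(k+1)}-x^{(k)}\|^2$ from below by a multiple of $\|x^{(k)}-\tilde x\|^2$. The resulting contraction factor at each step, chained by the tower rule, will then yield the product bound~\eqref{E}.

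First I would compute the step length from the update rule. Because $\alpha_{i_k}^{(k)}=r_{i_{k+1}}^{(k)}/h_{i_k}$ and $\|w^{(i_k)}\|^2=h_{i_k}$, one has $\|x^{(k+1)}-x^{(k)}\|^2=(r_{i_{k+1}}^{(k)})^2/h_{i_k}$. A direct calculation identifies $h_{i_k}=\|a_{i_{k+1}}\|^2\sin^2\langle a_{i_k},a_{i_{k+1}}\rangle$, so the definition of $\Delta$ gives $h_{i_k}\le \Delta\,\|a_{i_{k+1}}\|^2$, and therefore
\[
 \|x^{(k+1)}-x^{(k)}\|^2\ge \frac{(r_{i_{k+1}}^{(k)})^2}{\Delta\,\|a_{i_{k+1}}\|^2}.
\]
The extra factor $1/\Delta$ is precisely where the oblique projection improves on the orthogonal one: when the rows of $A$ are nearly linearly dependent, $\Delta$ is small and the gain is large.

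Next I would take the conditional expectation under the GRK sampling rule. Summing over $i\in\mathcal{U}_k$ with probability $|\tilde r_i^{(k)}|^2/\|\tilde r^{(k)}\|^2$ and using the defining inequality $|\tilde r_i^{(k)}|^2/\|a_i\|^2\geq \varepsilon_k\|r^{(k)}\|^2$ for $i\in\mathcal{U}_k$ gives $E_k\|x^{(k+1)}-x^{(k)}\|^2\geq \varepsilon_k\|r^{(k)}\|^2/\Delta$. The crucial step is then a lower bound on $\varepsilon_k\|r^{(k)}\|^2$. Here Lemma~1 pays off: $r_{i_k}^{(k)}=0$ and, for $k>1$, also $r_{i_{k-1}}^{(k)}=0$, so the maximum inside $\varepsilon_k$ is effectively taken over the restricted index set $\{1,\dots,m\}\setminus\{i_k\}$ (respectively $\setminus\{i_k,i_{k-1}\}$). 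Applying the weighted-average inequality $\max_{i\in S}\alpha_i/\beta_i \ge (\sum_{i\in S}\alpha_i)/(\sum_{i\in S}\beta_i)$ on that restricted set and bounding the remaining sum of squared row norms by $\gamma_1$ or $\gamma_2$ yields
\[
 \varepsilon_k\|r^{(k)}\|^2\ge \frac{1}{2\|A\|_F^2}\Bigl(\tfrac{\|A\|_F^2}{\gamma_{*}}+1\Bigr)\|r^{(k)}\|^2,
\]
with $\gamma_{*}=\gamma_1$ for $k=1$ and $\gamma_{*}=\gamma_2$ for $k>1$.

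Finally I would invoke the second conclusion of Lemma~2, $x^{(k)}-\tilde x\in R(A^T)$, to rewrite $\|r^{(k)}\|^2=\|A(x^{(k)}-\tilde x)\|^2\ge \lambda_{min}(A^TA)\|x^{(k)}-\tilde x\|^2$, producing $\zeta_k$ for $k\ge 1$. The case $k=0$ must be treated separately since the algorithm performs a classical orthogonal projection with uniformly random $i_1$; the crude bound $\|a_i\|^2\le\|A\|_F^2$ inside the uniform average then supplies the simpler factor $\zeta_0$. Iterating and applying the tower property produces~\eqref{E}; when $x^{(0)}\in R(A^T)$, taking $\tilde x=x^*$ together with $\prod\zeta_s\to 0$ and the invariance $x^{(k)}-x^*\in R(A^T)$ forces $x^{(k)}\to x^*$. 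The main obstacle will be the careful bookkeeping of which residual entries are known a priori to vanish, which is what forces the split between $k=0$, $k=1$, and $k>1$ in the weighted-average step and accounts for the three distinct expressions for $\zeta_s$.
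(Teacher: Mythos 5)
Your proposal follows essentially the same route as the paper's proof: the Pythagorean identity from Lemma~2, the bound $\|w^{(i_k)}\|^2\le\Delta\|a_{i_{k+1}}\|^2$, the use of Lemma~1 to drop the indices $i_k$ (and $i_{k-1}$) from the denominator of $\varepsilon_k$ before applying the weighted-average inequality with $\gamma_1,\gamma_2$, and the final passage through $\|A u\|^2\ge\lambda_{min}(A^TA)\|u\|^2$ for $u\in R(A^T)$. The only cosmetic difference is at $k=0$, where you bound the uniform average via $\|a_i\|^2\le\|A\|_F^2$ rather than the mediant inequality the paper uses, but both give the same $\zeta_0$.
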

\begin{proof}
When $k=1$, we can get
\begin{equation}
\begin{aligned}
\label{varepsilon1}
\varepsilon_1||A||^2_F&=\frac{\mathop{max}\limits_{1\leq i_{2} \leq m} \left\{\frac{|b_{i_{2}}-\langle a_{i_{2}},x^{(1)}\rangle|^2}{||a_{i_{2}}||^2}\right\}}{2\sum\limits_{i_{2}=1}^m\frac{||a_{i_{2}}||^2}{||A||^2_F}.\frac{|b_{i_{2}}-\langle a_{i_{2}},x^{(1)}\rangle|^2}{||a_{i_{2}}||^2}}+\frac{1}{2}\\
&=\frac{\mathop{max}\limits_{1\leq i_{2} \leq m} \left\{\frac{|b_{i_{2}}-\langle a_{i_{2}},x^{(1)}\rangle|^2}{||a_{i_{2}}||^2}\right\}}{2\sum\limits_{\substack{i_{2}=1\\i_{2}\neq i_{1}}}^m\frac{||a_{i_{2}}||^2}{||A||^2_F}.\frac{|b_{i_{2}}-\langle a_{i_{2}},x^{(1)}\rangle|^2}{||a_{i_{2}}||^2}}+\frac{1}{2}\\
&\geq \frac{1}{2} \left( \frac{||A||^2_F}{\sum\limits_{\substack{i_{2}=1\\i_{2}\neq i_{1}}}^m||a_{i_{2}}||^2}+1\right)\\
&\geq \frac{1}{2}\left( \frac{1}{\gamma_1}||A||^2_F+1\right).
\end{aligned}
\end{equation}
The second equality holds due to the equation (\ref{rikk}).

When $k>1$, we get
\begin{equation}
\begin{aligned}
\label{varepsilonk}
\varepsilon_k||A||^2_F&=\frac{\mathop{max}\limits_{1\leq i_{k+1} \leq m} \left(\frac{|b_{i_{k+1}}-\langle a_{i_{k+1}},x^{(k)}\rangle|^2}{||a_{i_{k+1}}||^2}\right)}{2\sum\limits_{i_{k+1}=1}^m\frac{||a_{i_{k+1}}||^2}{||A||^2_F}.\frac{|b_{i_{k+1}}-\langle a_{i_{k+1}},x^{(k)}\rangle|^2}{||a_{i_{k+1}}||^2}}+\frac{1}{2}\\
&=\frac{\mathop{max}\limits_{1\leq i_{k+1} \leq m}\left(\frac{|b_{i_{k+1}}-\langle a_{i_{k+1}},x^{(k)}\rangle|^2}{||a_{i_{k+1}}||^2}\right)}{2 \sum\limits_{\substack{i_{k+1} = 1\\ i_{k+1} \neq i_{k}, i_{k-1}}}^{m} \frac{||a_{i_{k+1}}||^2}{||A||^2_F}.\frac{|b_{i_{k+1}}-\langle a_{i_{k+1}},x^{(k)}\rangle|^2}{||a_{i_{k+1}}||^2}}+\frac{1}{2}\\
&\geq \frac{1}{2} \left(\ \frac{||A||^2_F}{\sum\limits_{\substack{i_{k+1}=1\\i_{k+1}\neq i_{k}, i_{k-1}}}^{m}||a_{i_{k+1}}||^2}+1\right)\\
&\geq \frac{1}{2}\left( \frac{1}{\gamma_2}||A||^2_F+1\right).
\end{aligned}
\end{equation}
The second equality holds due to the equation (\ref{rikk}) and the equation (\ref{rik-1k}).

Under the GRKO method, Lemma 2.2 still holds, so we can take the full expectation on both sides of the equation (\ref{xk+1-x}), and get that for $k=0$,
\begin{equation}
\begin{aligned}
\label{E1}
E||x^{(1)}-\tilde{x}||^2&=||x^{(0)}-\tilde{x}||^2-E||x^{(1)}-x^{(0)}||^2\\
&=||x^{(0)}-\tilde{x}||^2-\frac{1}{m}\sum\limits_{i_{1}=1}^m||\frac{b_{i_{1}}-\langle a_{i_{1}},x^{(0)}\rangle}{M(i_1)}a_{i_{1}}||^2\\
&\leq||x^{(0)}-\tilde{x}||^2-\frac{1}{m}\frac{||b-Ax^{(0)}||^2}{||A||_F^2}\\
&\leq \left(1-\frac{\lambda_{min}(A^TA)}{m||A||^2_F}\right)||x^{(0)}-\tilde{x}||^2\\
&=\zeta_0||x^{(0)}-\tilde{x}||^2,
\end{aligned}
\end{equation}
and for $k>0$,
\begin{equation}
\begin{aligned}
\label{Ek}
\mathrm{E}_k||x^{(k+1)}-\tilde{x}||^2&=||x^{(k)}-\tilde{x}||^2-\mathrm{E}_k||x^{(k+1)}-x^{(k)}||^2\\
&=||x^{(k)}-\tilde{x}||^2-\sum\limits_{i_{k+1} \in \mathcal{U}_k}\frac{|b_{i_{k+1}}-\langle a_{i_{k+1}},x^{(k)}\rangle |^2}{\sum\limits_{i_{k+1} \in \mathcal{U}_k}|b_{i_{k+1}}-\langle a_{i_{k+1}},x^{(k)}\rangle|^2}.\frac{|r^{(k)}_{i_{k+1}}|^2}{||w^{(i_k)}||^2}\\
&\leq||x^{(k)}-\tilde{x}||^2-\sum\limits_{i_{k+1} \in \mathcal{U}_k}\frac{|b_{i_{k+1}}-\langle a_{i_{k+1}},x^{(k)}\rangle |^2}{\sum\limits_{i_{k+1} \in \mathcal{U}_k}|b_{i_{k+1}}-\langle a_{i_{k+1}},x^{(k)}\rangle|^2}.\frac{|r^{(k)}_{i_{k+1}}|^2}{\Delta\cdot||a_{i_{k+1}}||^2}\\
&\leq||x^{(k)}-\tilde{x}||^2-\frac{\varepsilon_k}{\Delta}||b-Ax^{(k)}||^2\\
&=||x^{(k)}-\tilde{x}||^2-\frac{\varepsilon_k}{\Delta}||A(\tilde{x}-x^{(k)})||^2\\
&\leq(1-\frac{\varepsilon_k\lambda_{min}(A^TA)}{\Delta})||x^{(k)}-\tilde{x}||^2.
\end{aligned}
\end{equation}
The first inequality of the equation (\ref{E1}) is achieved with the use of the fact that $\frac{|b_1|}{|a_1|}+\frac{|b_2|}{|a_2|}\geq\frac{|b_1|+|b_2|}{|a_1|+|a_2|}$ (if $|a_1|>0$, $|a_2|>0$), and the first inequality of the equation (\ref{Ek}) is achieved with the use of the fact that $||w_{i_{k}}||^2=||a_{i_{k+1}}||^2-\frac{\langle a_{i_k},a_{i_{k+1}}\rangle^2}{||a_{i_{k+1}}||^2}=sin\langle a_{i_{k}},a_{i_{k+1}}\rangle^2||a_{i_{k+1}}||^2\leq\Delta\cdot||a_{i_{k+1}}||^2$, and the second inequality of the equation (\ref{Ek}) is achieved with the use of the definition of $\mathcal{U}_k$ which lead to
$$|b_{i_{k+1}}-\langle a_{i_{k+1}},x^{(k)}\rangle|^2\geq\varepsilon_k||b-Ax^{(k)}||^2||a_{i_{k+1}}||^2,{\forall}i_{k+1} \in \mathcal{U}_k .$$
Here in the last inequalities of the equation (\ref{E1}) and (\ref{Ek}), we have used the estimate
$||Au||^2_2\geq\lambda_{min}(A^TA)||u||^2$, which holds true for any $u \in C^n$ belonging to the column space of $A^T$.
According to the lemma 2.2, it holds.

By making use of the equation (\ref{varepsilon1}), (\ref{varepsilonk}) and (\ref{Ek}), we get
\begin{align*}
E_1||x^{(2)}-\tilde{x}||^2&\leq \left[1-\frac{1}{2}(\frac{1}{\gamma_1}||A||^2_F+1)\frac{\lambda_{min}(A^TA)}{\Delta\cdot||A||^2_F}\right]||x^{(1)}-\tilde{x}||^2\\
&=\zeta_1||x^{(1)}-\tilde{x}||^2,
\end{align*}
\begin{align*}
E_k||x^{(k+1)}-\tilde{x}||^2&\leq \left[1-\frac{1}{2}(\frac{1}{\gamma_2}||A||^2_F+1)\frac{\lambda_{min}(A^TA)}{\Delta\cdot||A||^2_F}\right]||x^{(k)}-\tilde{x}||^2 \\
&=\zeta_k||x^{(k)}-\tilde{x}||^2\quad(\forall k > 1).
\end{align*}

Finally, by recursion and taking the full expectation , the equation (\ref{E}) holds.
\end{proof}
{\bf Remark 1.} \ In the GRKO method, $h_{i_{k}}$ is not zero. Suppose $h_{i_{k}}=0$, which means $\exists \lambda >0$,  $\lambda a_{i_{k}}= a_{i_{k+1}}$. Due to the system is consistent, it holds $\langle a_{i_{k+1}},x^{*}\rangle =\lambda\langle a_{i_{k}},x^{*}\rangle=\lambda b_{i_{k}}=b_{i_{k+1}} $. According to the equation (\ref{rikk}), it holds $r^{(k)}_{i_{k+1}}=\lambda r^{(k)}_{i_{k}}=0$. From step 5 of Algorithm 3.1, we can konw that such index $i_{k+1}$ will not be selected.

{\bf Remark 2.} \ Set $\tilde{\zeta_k}=1-\frac{1}{2}(\frac{1}{\gamma_1}||A||^2_F+1)\frac{\lambda_{min}(A^TA)}{||A||^2_F}\quad(\forall k > 0)$, and the convergence of GRK method in \cite{BWW18} meets:
$$E_k\|x^{(k+1)}-x^*\|^2\leq\tilde{\zeta_k}\|x^{(k)}-x^*\|^2.$$
Obviously, $\zeta_1\leq\tilde{\zeta_1},\zeta_k<\tilde{\zeta_k}\quad(\forall k > 1)$ is satisfied, so the convergence speed of GRKO method is faster than GRK method.

\subsection{Maximal Weighted Residual Kaczmarz Method with Oblique Projection}
The selection strategy for the index $i_k$ used in the maximal weighted residual Kaczmarz (MWRK) method \cite{S77} is: Set
$$i_{k}=\mathop{arg\max}\limits_{i\in \left\{1,2,\cdots,m\right\}}\frac{|a_{i}^{T}x^{(k)}-b_{i}|}{\|a_i\|}.$$
McCormick proved the exponential convergence of the MWRK method. In \cite{DH19}, a new convergence conclusion of the MWRK method is given. We use its row index selection rule combined with KO-type method to obtain MWRKO method, and the algorithm is as follows:
\begin{breakablealgorithm}
  \caption{Maximal Weighted Residual Kaczmarz Method with Oblique Projection (MWRKO)}
  \label{MWRKO}
  \begin{algorithmic}[1]
    \Require $A\in R^{m\times n}$, $b\in R^{m}$, $x^{(0)}\in R^n$, $K$, $\varepsilon>0$
    \State For $i=1:m$, $M(i)=\|a_i\|^2$
    \State Compute $i_1=\mathop{arg\max}\limits_{i\in \left\{1,2,\cdots,m\right\}}\frac{|a_{i}^{T}x^{(0)}-b_{i}|}{\|a_i\|}$, and $x^{(1)}=x^{(0)}+\frac{b_{i_{1}}-\langle a_{i_{1}}, x^{(0)}\rangle}{M(i_{1})}a_{i_{1}}$
    \For {$k=1,2,\cdots, K$}
    \State Compute $i_{k+1}=\mathop{arg\max}\limits_{i\in \left\{1,2,\cdots,m\right\}}\frac{|a_{i}^{T}x^{(k)}-b_{i}|}{\|a_i\|}$
    \State Compute $D_{i_k}=\langle a_{i_{k}}, a_{i_{k+1}}\rangle$ \ and $r_{i_{k+1}}^{(k)}=b_{i_{k+1}}-\langle a_{i_{k+1}}, x^{(k)}\rangle$
    \State Compute $w^{(i_k)}=a_{i_{k+1}}-\frac{D_{i_k}}{M(i_k)}a_{i_{k}}$ \ and \ $h_{i_k}(=\|w^{(i_k)}\|^2)=M(i_{k+1})-\frac{D_{i_k}}{M(i_k)}D_{i_k}$
    \State $\alpha_{i_k}^{(k)}=\frac{r_{i_{k+1}}^{(k)}}{h_{i_k}}$ \ and $x^{(k+1)}=x^{(k)}+\alpha_{i_k}^{(k)} w^{({i_k})}$
    \EndFor
    \State Output $x^{(K+1)}$
  \end{algorithmic}
\end{breakablealgorithm}

The convergence of the MWRKO method is provided as follows.

\begin{theorem}
 Consider the consistent linear system (\ref{Ax=b}), where the coefficient matrix $A\in R^{m\times n}$, $b \in R^m$. Let $x^{(0)}\in R^{n}$ be an arbitrary initial approximation , $\tilde{x}$ is a solution of system (\ref{Ax=b}) such that $P_{N(A)}(\tilde{x})=P_{N(A)}(x^{(0)})$. Then the iteration sequence$ \left\{x^{(k)}\right\}^{\infty}_{k=1}$ generated by the MWRKO method obeys
 \begin{align}
\label{Tf}
||x^{(k)}-\tilde{x}||^2\leq \mathop{\Pi}\limits_{s=0}^{k-1}\rho_s||x^{(0)}-\tilde{x}||^2,
\end{align}
where $\rho_0=1-\frac{\lambda_{min}(A^TA)}{\|A\|^2_F},$
$\rho_1=1-\frac{\lambda_{min}(A^TA)}{\Delta\cdot\gamma_1},$
$\rho_k=1-\frac{\lambda_{min}(A^TA)}{\Delta\cdot\gamma_2}\quad(\forall k > 1),$
which $\gamma_1$, $\gamma_2$ and $\Delta$ are defined by equations (\ref{gamma1}), (\ref{gamma2}) and (\ref{delta}) respectively.

In addition, if $x^{(0)}\in R(A^T)$, the sequence $ \left\{x^{(k)}\right\}^{\infty}_{k=1}$ converges to the least-norm solution of the system (\ref{Ax=b}), i.e. $\lim\limits_{k\rightarrow\infty}x^{(k)}=x^*=A^\dag b$.
\end{theorem}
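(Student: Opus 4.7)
The plan is to mirror the deterministic version of the argument behind Theorem 3.1. Since the MWRKO index rule is deterministic, no expectations are needed: at every iteration $k\ge 0$ Lemma 2.2 yields
\[
\|x^{(k+1)}-\tilde x\|^2=\|x^{(k)}-\tilde x\|^2-\|x^{(k+1)}-x^{(k)}\|^2,
\]
so the whole argument reduces to lower bounding $\|x^{(k+1)}-x^{(k)}\|^2$ by a constant multiple of $\|x^{(k)}-\tilde x\|^2$ in each of the three regimes $k=0$, $k=1$, $k>1$.

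For the base case $k=0$, the step is an ordinary orthogonal projection onto $H_{i_1}$, so $\|x^{(1)}-x^{(0)}\|^2=|r^{(0)}_{i_1}|^2/\|a_{i_1}\|^2$. The maximum-weighted-residual rule gives $|r^{(0)}_{i_1}|^2/\|a_{i_1}\|^2\ge |r^{(0)}_i|^2/\|a_i\|^2$ for every $i$; taking the convex combination with weights $\|a_i\|^2/\|A\|_F^2$ I get $|r^{(0)}_{i_1}|^2/\|a_{i_1}\|^2\ge \|b-Ax^{(0)}\|^2/\|A\|_F^2$. Combined with $\|b-Ax^{(0)}\|^2\ge\lambda_{\min}(A^TA)\|x^{(0)}-\tilde x\|^2$, which is valid because $x^{(0)}-\tilde x\in R(A^T)$ by the last part of Lemma 2.2, this supplies the factor $\rho_0$.

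For $k\ge 1$ the KO step size satisfies $\|x^{(k+1)}-x^{(k)}\|^2=|r^{(k)}_{i_{k+1}}|^2/h_{i_k}$, and by the definition of $\Delta$ we have $h_{i_k}\le\Delta\|a_{i_{k+1}}\|^2$, giving
\[
\|x^{(k+1)}-x^{(k)}\|^2\ge \frac{|r^{(k)}_{i_{k+1}}|^2}{\Delta\|a_{i_{k+1}}\|^2}.
\]
The key observation is that Lemma 2.1 forces $r^{(k)}_{i_k}=0$ always, and additionally $r^{(k)}_{i_{k-1}}=0$ once $k>1$. When I apply the same maximum-dominates-weighted-average trick as in the $k=0$ case, the indices with vanishing residual entries drop out of the numerator, so $\|A\|_F^2$ in the denominator is replaced by $\sum_{i\ne i_k}\|a_i\|^2\le\gamma_1$ when $k=1$ and by $\sum_{i\ne i_k,i_{k-1}}\|a_i\|^2\le\gamma_2$ when $k>1$. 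Combining with $\|b-Ax^{(k)}\|^2\ge\lambda_{\min}(A^TA)\|x^{(k)}-\tilde x\|^2$ delivers $\rho_1$ and $\rho_k$, and unrolling the recursion $\|x^{(k+1)}-\tilde x\|^2\le\rho_k\|x^{(k)}-\tilde x\|^2$ yields the product bound (\ref{Tf}). Convergence to $x^*=A^\dag b$ when $x^{(0)}\in R(A^T)$ is then immediate: Lemma 2.2 gives $P_{N(A)}(x^{(k)})=P_{N(A)}(x^{(0)})=0=P_{N(A)}(x^*)$, so one may take $\tilde x=x^*$ in (\ref{Tf}), and each $\rho_s<1$ forces $x^{(k)}\to x^*$.

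The only real subtlety is the bookkeeping that separates the three regimes; this is precisely where the vanishing of \emph{two} residual entries guaranteed by Lemma 2.1 is exploited, upgrading the denominator from $\|A\|_F^2$ to $\gamma_1$ and eventually $\gamma_2$. Everything else is deterministic arithmetic transported from the proof of Theorem 3.1 with the greedy-randomized averaging replaced by a single hardest-to-fit residual term.
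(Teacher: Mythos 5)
Your proposal is correct and follows essentially the same route as the paper's proof: the telescoping identity from Lemma 2.2, the bound $\|w^{(i_k)}\|^2\leq\Delta\|a_{i_{k+1}}\|^2$, the max-dominates-weighted-average step with the vanishing residual entries from Lemma 2.1 shrinking the denominator from $\|A\|_F^2$ to $\gamma_1$ and then $\gamma_2$, and finally $\|Au\|^2\geq\lambda_{min}(A^TA)\|u\|^2$ for $u\in R(A^T)$. No substantive differences.
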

\begin{proof}
Under the MWRKO method, Lemma 2.2 still holds. For $k=1$, we have
\begin{equation}
\begin{aligned}
\label{T1}
\|x^{(1)}-\tilde{x}\|^2&=\|x^{(0)}-\tilde{x}\|^2-\|x^{(1)}-x^{(0)}\|^2\\
&=\|x^{(0)}-\tilde{x}\|^2-\frac{|b_{i_{1}}-\langle a_{i_{1}},x^{(0)}\rangle|^2}{M(i_1)}\\
&=\|x^{(0)}-\tilde{x}\|^2-\frac{|b_{i_{1}}-\langle a_{i_{1}},x^{(0)}\rangle|^2}{M(i_1)}\cdot\frac{\|b-Ax^{(0)}\|^2}{\sum\limits_{\substack{i=1}}^{m}\frac{|b_{i}-\langle a_{i},x^{(0)}\rangle|^2}{M(i)}\cdot  M(i)}\\
&\leq\|x^{(0)}-\tilde{x}\|^2-\frac{\|A(\tilde{x}-x^{(0)})\|^2}{\|A\|^2_F}\\
&\leq\|x^{(0)}-\tilde{x}\|^2-\frac{\lambda_{min}(A^TA)}{\|A\|^2_F}\|x^{(0)}-\tilde{x}\|^2\\
&=\left(1-\frac{\lambda_{min}(A^TA)}{\|A\|^2_F}\right)\|x^{(0)}-x^*\|^2\\
&=\rho_0\|x^{(0)}-x^*\|^2.
\end{aligned}
\end{equation}
For $k=1$,we have
\begin{equation}
\begin{aligned}
\label{T2}
\|x^{(2)}-\tilde{x}\|^2&=\|x^{(1)}-\tilde{x}\|^2-\|x^{(2)}-x^{(1)}\|^2\\
&=\|x^{(1)}-\tilde{x}\|^2-\frac{|b_{i_{2}}-\langle a_{i_{2}},x^{(1)}\rangle|^2}{\|w^{(i_1)}\|^2}\\
&\leq\|x^{(1)}-\tilde{x}\|^2-\frac{|b_{i_{2}}-\langle a_{i_{2}},x^{(1)}\rangle|^2}{\Delta\cdot M(i_2)}\cdot\frac{\|b-Ax^{(1)}\|^2}{\sum\limits_{\substack{i=1,i\neq i_1}}^{m}\frac{|b_{i}-\langle a_{i},x^{(1)}\rangle|^2}{M(i)}\cdot  M(i)}\\
&\leq\|x^{(1)}-\tilde{x}\|^2-\frac{\|A(\tilde{x}-x^{(1)})\|^2}{\Delta\cdot\gamma_1}\\
&\leq\|x^{(1)}-\tilde{x}\|^2-\frac{\lambda_{min}(A^TA)}{\Delta\cdot\gamma_1}\|x^{(1)}-\tilde{x}\|^2\\
&=\left(1-\frac{\lambda_{min}(A^TA)}{\Delta\cdot\gamma_1}\right)\|x^{(1)}-\tilde{x}\|^2\\
&=\rho_1\|x^{(1)}-\tilde{x}\|^2.
\end{aligned}
\end{equation}
For $k>1$,we have
\begin{equation}
\begin{aligned}
\label{T3}
\|x^{(k+1)}-\tilde{x}\|^2&=\|x^{(k)}-\tilde{x}\|^2-\|x^{(k+1)}-x^{(k)}\|^2\\
&=\|x^{(k)}-\tilde{x}\|^2-\frac{|b_{i_{k+1}}-\langle a_{i_{k+1}},x^{(k)}\rangle|^2}{\|w^{(i_{k})}\|^2}\\
&\leq\|x^{(k)}-\tilde{x}\|^2-\frac{|b_{i_{k+1}}-\langle a_{i_{k+1}},x^{(k)}\rangle|^2}{\Delta\cdot M(i_{k+1})}\cdot\frac{\|b-Ax^{(k)}\|^2}{\sum\limits_{\substack{i=1,i\neq i_k,i_{k-1}}}^{m}\frac{|b_{i}-\langle a_{i},x^{(k)}\rangle|^2}{M(i)}\cdot  M(i)}\\
&\leq\|x^{(k)}-\tilde{x}\|^2-\frac{\|A(\tilde{x}-x^{(k)})\|^2}{\Delta\cdot\gamma_2}\\
&\leq\|x^{(k)}-\tilde{x}\|^2-\frac{\lambda_{min}(A^TA)}{\Delta\cdot\gamma_2}\|x^{(k)}-\tilde{x}\|^2\\
&=\left(1-\frac{\lambda_{min}(A^TA)}{\Delta\cdot\gamma_2}\right)\|x^{(k)}-\tilde{x}\|^2\\
&=\rho_k\|x^{(k)}-\tilde{x}\|^2.
\end{aligned}
\end{equation}
Here in the last inequalities of the equation (\ref{T1}), (\ref{T2}) and (\ref{T3}), we have used the estimate
$$||Au||^2_2\geq\lambda_{min}(A^TA)||u||^2,$$which holds true for any $u \in C^n$belonging to the column space of $A^T$.
According to the lemma 2.2, it holds.
From the equation (\ref{T1}), (\ref{T2}) and (\ref{T3}), the equation (\ref{Tf}) holds.
\end{proof}
{\bf Remark 3.}When multiple indicators $i_{k+1}$ are met in Step 2 of Algorithm 3.2 in the iterative process, we randomly select any one of them.

{\bf Remark 4.} In the MWRKO method, the reason of $h_{i_{k}}\neq 0$ is similar to Remark 1.

{\bf Remark 5.} \ Set $\tilde{\rho}_0=1-\frac{\lambda_{min}(A^TA)}{\|A\|^2_F}$,$\tilde{\rho}_k=1-\frac{\lambda_{min}(A^TA)}{\gamma_1}\quad(\forall k > 0),$
 and the convergence of MWRK method in \cite{DH19} meets:
$$||x^{(k)}-x^*||^2\leq \mathop{\Pi}\limits_{s=0}^{k-1}\tilde{\rho_s}||x^{(0)}-x^*||^2,$$
Obviously, $\rho_k<\tilde{\rho_k}\quad(\forall k > 1)$, $\rho_1\leq\tilde{\rho_1}$ and $\rho_0=\tilde{\rho_0}$ , so the convergence speed of MWRKO method is faster than MWRK method. Note that $\tilde{\rho}_k<\tilde{\zeta}_k$, $\rho_k<\zeta_k\quad(\forall k > 0)$, that is
$V_{MWRK}<V_{MWRKO}$, $V_{GRK}<V_{GRKO}$, $V_{GRK}<V_{MWRK}$, $V_{GRKO}<V_{MWRKO}$, where $V$ represents the convergence speed.
\section{Numerical Experiments}
In this section, some numerical examples are provided to illustrate the effectiveness of the greedy randomized Kaczmarz (GRK) method, the greedy randomized Kaczmarz method with oblique projection (GRKO), the maximal weighted residual Kaczmarz method (MWRK), and the maximal weighted residual Kaczmarz method (MWRKO) . All experiments are carried out using MATLAB (version R2019b) on a personal computer with 1.60 GHz central processing unit (Intel(R) Core(TM) i5-10210U CPU), 8.00 GB memory, and Windows operating system (64 bit Windows 10).

In our implementations, the right vector $b=Ax^*$ such that the exact solution $x^*\in R^n$ is a vector generated by the $rand$ function. Define the relative residual error (RRE) at the $k$th iteration as follows:
$$\text{RRE}=\frac{\|b-Ax^{(k)}\|^2}{\|b\|^2}.$$
The initial point $x^{(0)}\in R^n$ is set to be a zero vector, and the iterations are terminated once the relative solution error satisfies $\text{RRE}<\omega$ or the number of iteration steps exceeds 100,000. If the number of iteration steps exceeds 100,000, it is denoted as "-".

We will compare the numerical performance of these methods in terms of the number of iteration steps (denoted as "IT") and the computing time in seconds (denoted as "CPU"). Here the CPU and IT mean the arithmetical averages of the elapsed running
times and the required iteration steps with respect to 50 trials repeated runs of the corresponding method.
\subsection{Experiments for Random Matrix Collection in $[0,1]$}
The random matrix collection in $[0,1]$ is randomly generated by using the MATLAB function $rand$, and the numerical results are reported in Tables 1-2 and Figures 2-3. In this subsection, we let $\omega=0.5\times10^{-8}$. According to the characteristics of the matrix generated by  MATLAB function $rand$, Table 1 and Table 2 are  the
experiments for the overdetermined consistent linear systems, underdetermined consistent linear systems respectively. Under the premise of convergence, all methods can find the unique least Euclidean norm solution $x^*$.

From Table 1 and Figure 2, we can see that when the linear system is overdetermined, with the increase of $m$, the IT of all methods decreases, but the CPU shows an increasing trend. Our new methods -- the GRKO method and the MWRKO method, perform better than the GRK method and the MWRK method respectively in both iteration steps and running time. Among the four methods, the MWRKO method performs best. From Table 2 and Figure 3, we can see that in the case of underdetermined linear system, with the increase of $m$, the IT and CPU of all methods decrease.

In this group of experiments, whether it is an overdetermined or underdetermined linear system, whether in terms of the IT or CPU, the GRKO method and the MWRKO method perform very well compared with the GRK method and the MWRK method. These experimental phenomena are consistent with the theoretical convergence conclusions we got.
\begin{table}[H]
\label{table1}
\centering
\caption{IT and CPU of GRK, GRKO, MWRK and MWRKO for $m \times n$ matrices $A$ with $n = 500$ and different $m$ when the consistent linear system is overdetermined}
\renewcommand\arraystretch{0.9}
\begin{tabular}{lllllllllllll}
\cline{1-9}
\multicolumn{1}{c}{\multirow{2}{*}{m}} & \multicolumn{4}{c|}{IT}                           & \multicolumn{4}{c}{CPU}           &  &  &  &  \\ \cline{2-9}
\multicolumn{1}{c}{}                   & GRK   & GRKO & MWRK  & \multicolumn{1}{l|}{MWRKO} & GRK    & GRKO   & MWRK   & MWRKO  &  &  &  &  \\ \cline{1-9}
1000                                   & 12072 & 2105 & 11265 & \multicolumn{1}{l|}{1913}  & 1.2824 & 0.2099 & 0.7192 & 0.1089 &  &  &  &  \\
2000                                   & 4726  & 1088 & 4292  & \multicolumn{1}{l|}{898}   & 1.4792 & 0.3413 & 1.1107 & 0.2157 &  &  &  &  \\
3000                                   & 3362  & 897  & 3234  & \multicolumn{1}{l|}{771}   & 1.7550 & 0.5172 & 1.5711 & 0.3575 &  &  &  &  \\
4000                                   & 2663  & 859  & 2517  & \multicolumn{1}{l|}{668}   & 1.9415 & 0.6396 & 1.6634 & 0.4807 &  &  &  &  \\
5000                                   & 2398  & 826  & 2282  & \multicolumn{1}{l|}{605}   & 2.4134 & 0.8160 & 2.1528 & 0.5801 &  &  &  &  \\
6000                                   & 2100  & 772  & 2018  & \multicolumn{1}{l|}{586}   & 2.6235 & 0.8912 & 2.0975 & 0.6486 &  &  &  &  \\
7000                                   & 1970  & 752  & 1829  & \multicolumn{1}{l|}{562}   & 2.6019 & 1.0720 & 2.5441 & 0.7822 &  &  &  &  \\
8000                                   & 1861  & 747  & 1703  & \multicolumn{1}{l|}{555}   & 3.1035 & 1.2421 & 2.4987 & 0.8390 &  &  &  &  \\
9000                                   & 1750  & 747  & 1612  & \multicolumn{1}{l|}{530}   & 3.0223 & 1.3055 & 2.6148 & 0.8730 &  &  &  &  \\ \cline{1-9}
                                       &       &      &       &                            &        &        &        &        &  &  &  &
\end{tabular}
\end{table}
\begin{figure}[H]
\label{figure2}
  \centering
  \subfigure[]{
    \includegraphics[width=2.75in]{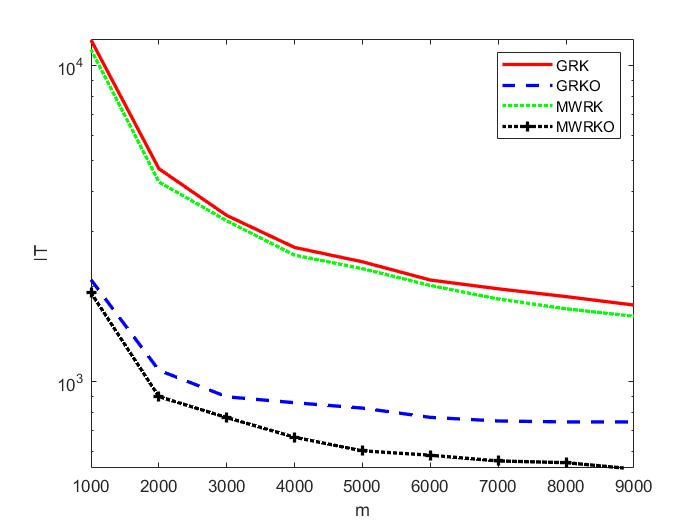}
  }
  \subfigure[]{
    \includegraphics[width=2.75in]{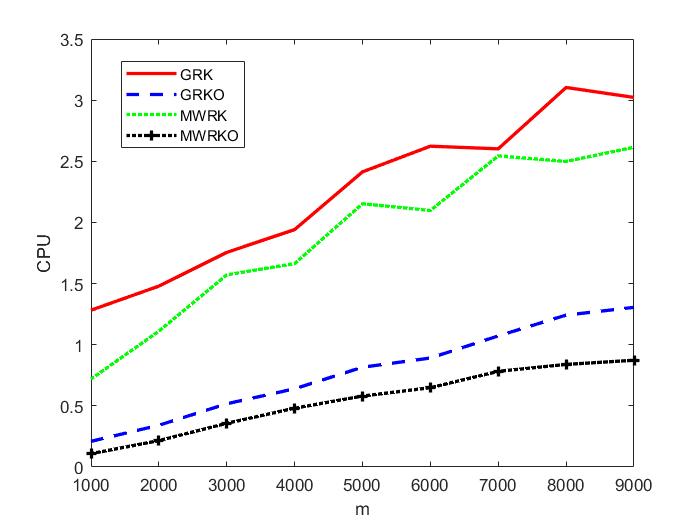}
  }
  \caption{Matrix $A\in R^{m\times 500}$ is generated by the $rand$ function in the interval $[0,1]$.
   (a): IT of the GRK, GRKO, MWRK, MWRKO methods changes with $m$.  (b): CPU of the GRK, GRKO, MWRK, MWRKO methods changes with $m$. }
\end{figure}
\begin{table}[H]
\label{table2}
\centering
\caption{IT and CPU of GRK, GRKO, MWRK and MWRKO for $m \times n$ matrices $A$ with $n = 2000$ and different $m$ when the consistent linear system is underdetermined}
\renewcommand\arraystretch{0.9}
\begin{tabular}{lllll|llll}
\hline
\multicolumn{1}{c}{\multirow{2}{*}{m}} & \multicolumn{4}{c|}{IT}      & \multicolumn{4}{c}{CPU}             \\ \cline{2-9}
\multicolumn{1}{c}{}                   & GRK   & GRKO & MWRK  & MWRKO & GRK     & GRKO   & MWRK    & MWRKO  \\ \hline
100                                    & 802   & 286  & 848   & 272   & 0.0496  & 0.0223 & 0.0258  & 0.0165 \\
200                                    & 1968  & 523  & 1948  & 481   & 0.1648  & 0.0496 & 0.0831  & 0.0276 \\
300                                    & 3104  & 759  & 3148  & 709   & 0.3982  & 0.1090 & 0.2404  & 0.0664 \\
400                                    & 4586  & 1002 & 4612  & 930   & 1.0539  & 0.2594 & 0.8433  & 0.1920 \\
500                                    & 6233  & 1250 & 6336  & 1215  & 1.9528  & 0.4409 & 1.6836  & 0.3576 \\
600                                    & 8671  & 1576 & 8882  & 1497  & 3.6363  & 0.7493 & 3.1625  & 0.5957 \\
700                                    & 11895 & 2063 & 11575 & 1879  & 5.8642  & 1.1078 & 5.0029  & 0.9087 \\
800                                    & 14758 & 2451 & 14888 & 2394  & 8.4280  & 1.5350 & 7.7007  & 1.6405 \\
900                                    & 18223 & 3250 & 18608 & 2945  & 12.0469 & 2.2750 & 10.9511 & 1.8884 \\ \hline
\end{tabular}
\end{table}
\begin{figure}[H]
\label{figure3}
  \centering
  \subfigure[]{
    \includegraphics[width=2.75in]{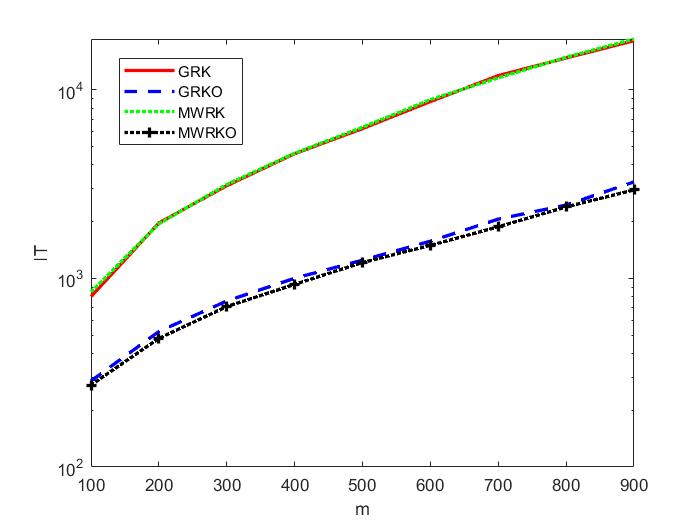}
  }
  \subfigure[]{
    \includegraphics[width=2.75in]{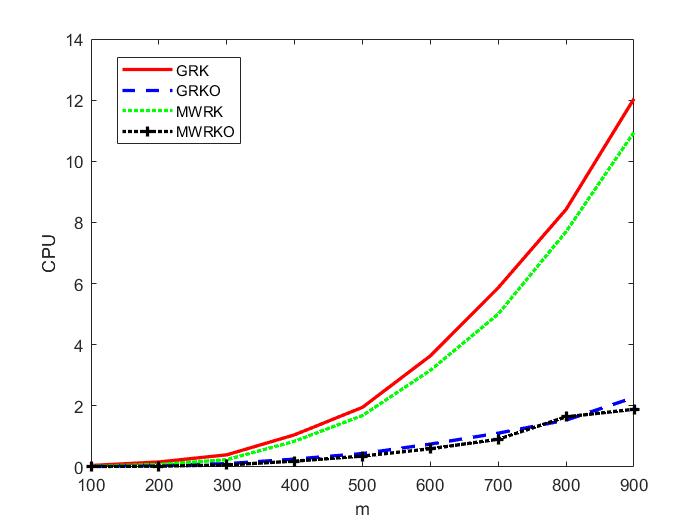}
  }
  \caption{Matrix $A\in R^{m\times 2000}$ is generated by the $rand$ function in the interval $[0,1]$.
   (a): IT of the GRK, GRKO, MWRK, MWRKO methods changes with $m$.  (b): CPU of the GRK, GRKO, MWRK, MWRKO methods changes with $m$. }
\end{figure}
\subsection{Experiments for Random Matrix Collection in $[c,1]$}
In this subsection, the entries of our coefficient matrix are randomly generated in the interval $[c,1]$. This set of experiments was also done in \cite{NW13} and \cite{WWT21}, and pointed out that when the value of $c$ is close to $1$, the rows of matrix $A$ is closer to linear correlation. Theorem 3.1 and theorem 3.2 have shown the effectiveness of the GRKO method and the MWRKO method in this case. In order to verify this phenomenon, we construct several $1000\times500$ and $500\times 1000$ matrices $A$, which entries is independent identically distributed uniform random variables on some interval $[c,1]$. Note that there is nothing special about this interval, and other intervals yield the same results when the interval length remains the same. In the experiment of this subsection, we take $\omega=0.5\times10^{-8}$.

From Table 3 and Figure 4 , it can be seen that when the linear system is overdetermined, with $c$ getting closer to $1$, the GRK method and the MWRK method have a significant increase in the number of iterations and running time. When $c$ increases to $0.7$, the GRK method and the MWRK method exceeds the maximum number of iterations.  But the IT and CPU of the GRKO method and the MWRKO method have decreasing trends. From Table 4 and Figure 5, we can get that the numerical experiment of the coefficient matrix $A$ in the underdetermined case has similar laws to the numerical experiment in the overdetermined case.

In this group of experiments, it can be observed that when the rows of the matrix are close to linear correlation, the GRKO method and the MWRKO method can find the least Euclidean norm solution more quickly than the GRK method and the MWRK methd.
\begin{table}[H]
\label{table3}
\centering
\caption{IT and CPU of GRK, GRKO, MWRK and MWRKO for matrices $A\in R^{1000\times500}$ generated by the $rand$ function in the interval $[c,1]$}
\renewcommand\arraystretch{0.9}
\begin{tabular}{lllll|llll}
\hline
\multicolumn{1}{c}{\multirow{2}{*}{c}} & \multicolumn{4}{c|}{IT}      & \multicolumn{4}{c}{CPU}           \\ \cline{2-9}
\multicolumn{1}{c}{}                   & GRK   & GRKO & MWRK  & MWRKO & GRK    & GRKO   & MWRK   & MWRKO  \\ \hline
0.1                                    & 14757 & 2036 & 14594 & 1830  & 1.5811 & 0.2180 & 0.9419 & 0.0969 \\
0.2                                    & 21103 & 1840 & 20717 & 1714  & 2.1684 & 0.2287 & 1.1828 & 0.1003 \\
0.3                                    & 27375 & 1708 & 26986 & 1569  & 3.5926 & 0.1789 & 1.5865 & 0.1195 \\
0.4                                    & 36293 & 1708 & 35595 & 1394  & 3.6751 & 0.1802 & 2.0682 & 0.0885 \\
0.5                                    & 53485 & 1428 & 52853 & 1310  & 5.3642 & 0.1486 & 3.0024 & 0.0847 \\
0.6                                    & 84204 & 1353 & 81647 & 1185  & 9.0879 & 0.1388 & 4.5468 & 0.0767 \\
0.7                                    & -     & 1227 & -     & 1036  & -      & 0.1298 & -      & 0.0564 \\
0.8                                    & -     & 1080 & -     & 926   & -      & 0.1107 & -      & 0.0580 \\
0.9                                    & -     & 715  & -     & 583   & -      & 0.0707 & -      & 0.0324 \\ \hline
\end{tabular}
\end{table}

\begin{figure}[H]
\label{figure4}
  \centering
  \subfigure[]{
    \includegraphics[width=2.75in]{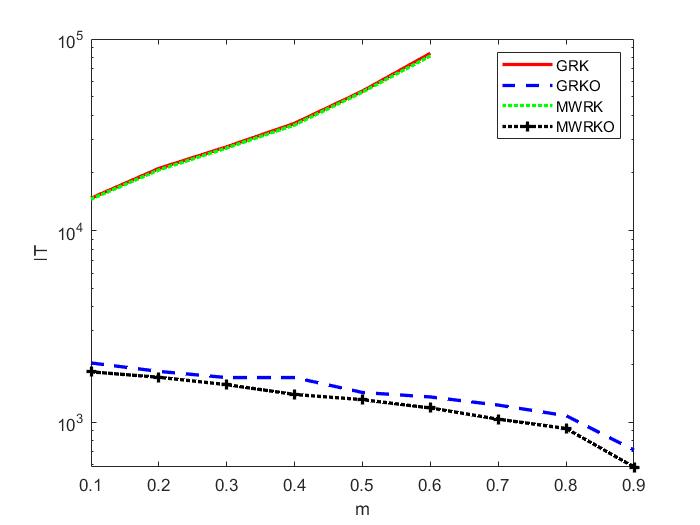}
  }
  \subfigure[]{
    \includegraphics[width=2.75in]{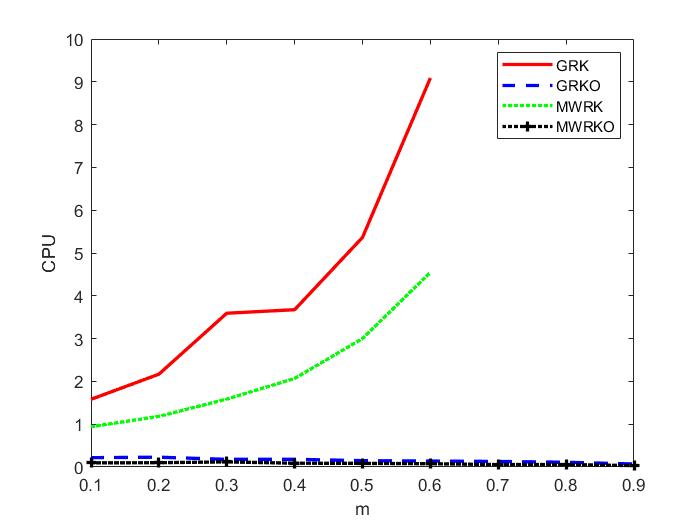}
  }
 \caption{Matrix $A\in R^{1000\times 500}$ is generated by the $rand$ function in the interval $[c,1]$.
   (a): IT of the GRK, GRKO, MWRK, MWRKO methods changes with $c$.  (b): CPU of the GRK, GRKO, MWRK, MWRKO methods changes with $c$. }
\end{figure}
\begin{table}[H]
\label{table4}
\centering
\caption{IT and CPU of GRK, GRKO, MWRK and MWRKO for matrices $A\in R^{500\times1000}$ generated by the $rand$ function in the interval $[c,1]$ }
\renewcommand\arraystretch{0.9}
\begin{tabular}{lllll|llll}
\hline
\multicolumn{1}{c}{\multirow{2}{*}{c}} & \multicolumn{4}{c|}{IT}      & \multicolumn{4}{c}{CPU}           \\ \cline{2-9}
\multicolumn{1}{c}{}                   & GRK   & GRKO & MWRK  & MWRKO & GRK    & GRKO   & MWRK   & MWRKO  \\ \hline
0.1                                    & 16828 & 1968 & 16913 & 1795  & 1.7612 & 0.2103 & 0.9353 & 0.1083 \\
0.2                                    & 23518 & 2003 & 23234 & 1857  & 2.3037 & 0.2066 & 1.3119 & 0.1230 \\
0.3                                    & 30875 & 1661 & 31017 & 1688  & 2.9310 & 0.1635 & 1.7373 & 0.0997 \\
0.4                                    & 41242 & 1511 & 40986 & 1515  & 4.3004 & 0.1726 & 2.2899 & 0.1025 \\
0.5                                    & 60000 & 1399 & 59750 & 1349  & 5.4754 & 0.1252 & 2.8920 & 0.0727 \\
0.6                                    & 97045 & 1270 & 95969 & 1264  & 8.5229 & 0.1173 & 4.8380 & 0.0688 \\
0.7                                    & -     & 1082 & -     & 1022  & -      & 0.1168 & -      & 0.0646 \\
0.8                                    & -     & 858  & -     & 863   & -      & 0.0960 & -      & 0.0585 \\
0.9                                    & -     & 549  & -     & 598   & -      & 0.0582 & -      & 0.0353 \\ \hline
\end{tabular}
\end{table}
\begin{figure}[H]
\label{figure5}
  \centering
  \subfigure[]{
    \includegraphics[width=2.75in]{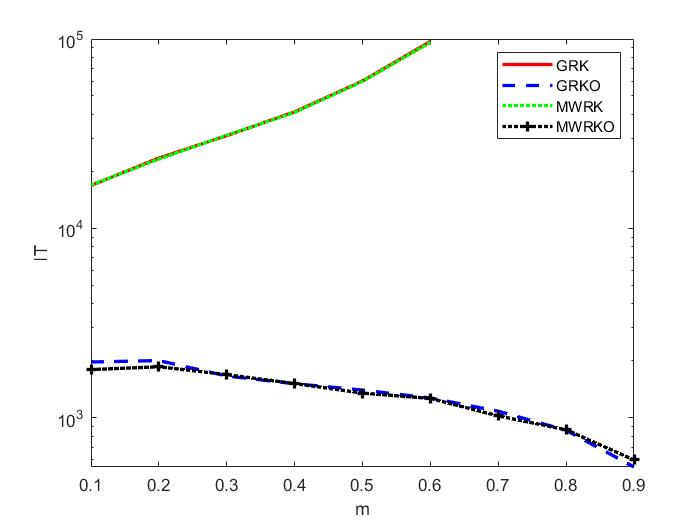}
  }
  \subfigure[]{
    \includegraphics[width=2.75in]{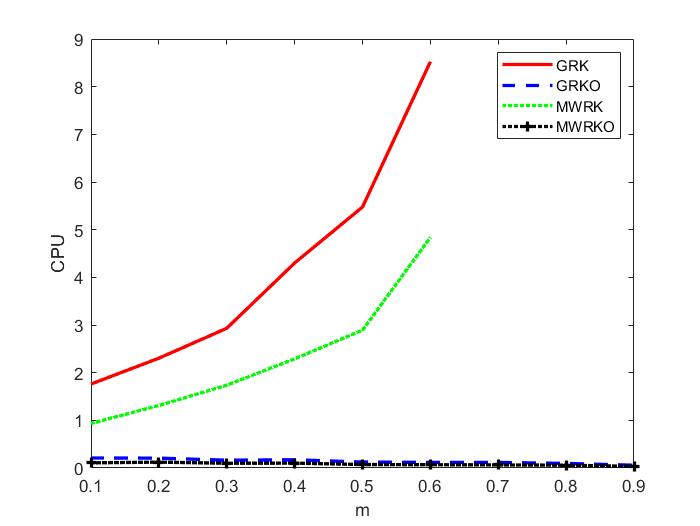}
  }
  \caption{Matrix $A\in R^{500\times 1000}$ is generated by the $rand$ function in the interval $[c,1]$.
   (a): IT of the GRK, GRKO, MWRK, MWRKO methods changes with $c$.  (b): CPU of the GRK, GRKO, MWRK, MWRKO methods changes with $c$. }
\end{figure}
\subsection{Experiments for Sparse Matrix}
In this subsection, we will give three examples to illustrate the effectiveness of our new methods applied to sparse matrix. The coefficient matrices $A$ of these three examples are the practical problems from \cite{TH11} and the two test problems from \cite{HP18}. We uniformly take $\omega=0.5\times10^{-5}$ in these three numerical examples.

\begin{example}\label{EX4.1}\upshape We solve the problem (\ref{Ax=b}) with the coefficient matrix $A\in R^{m\times n}$ chosen form the University of Florida sparse matrix collection \cite{TH11}. the matrices are $divorce$, $photogrammetry$, $Ragusa18$, $Trec8$, $Stranke94$, and $well1033$. In Table 5, we list some properties of these matrices, where density is defined as follows:
$$\text{density}=\frac{\text{number of nonzeros of m-by-n matrix}}{mn}.$$
\end{example}

In order to solve Example 4.1, we list the IT, CPU and historical convergence of the GRK, GRKO, MWRK, and MWRKO methods in Figure 6 and Table 6, respectively. It can be seen that MWRKO's IT and CPU are the least. Although the GRKO method is not faster than the MWRK method for most of the experiments in Table 6, it is always faster than the GRK method.
\begin{table}[H]
\label{table5}
\centering
\caption{The properties of different sparse matrices.}
\renewcommand\arraystretch{0.9}
\begin{tabular}{ccccccccc}
\cline{1-8}
\multicolumn{2}{c}{A}       & divorce & photogrammetry & Ragusa18 & Trec8   & Stranke94 & well1033 &  \\ \cline{1-8}
\multicolumn{2}{c}{m$\times$n}     & 50$\times$9    & 1388$\times$390       & 23$\times$23    & 23$\times$84   & 10$\times$10     & 1033$\times$320 &  \\
\multicolumn{2}{c}{rank}    & 9       & 390            & 15       & 23      & 10        & 320      &  \\
\multicolumn{2}{c}{cond$(A)$}       & 19.3908 & 4.35e+8    & 3.48e+35 & 26.8949 & 51.7330   & 166.1333 &  \\
\multicolumn{2}{c}{density} & 50.00\% & 2.18\%         & 12.10\%  & 28.42\% & 90.00\%   & 1.43\%   &  \\ \cline{1-8}
              &             &         &                &          &         &           &          &  \\
              &             &         &                &          &         &           &          &  \\
              &             &         &                &          &         &           &          &  \\
              &             &         &                &          &         &           &          &  \\
              &             &         &                &          &         &           &          &  \\
              &             &         &                &          &         &           &          &
\end{tabular}
\end{table}

\begin{figure}[H]
\label{figure6}
  \centering
  \subfigure[]{
    \includegraphics[width=1.5in]{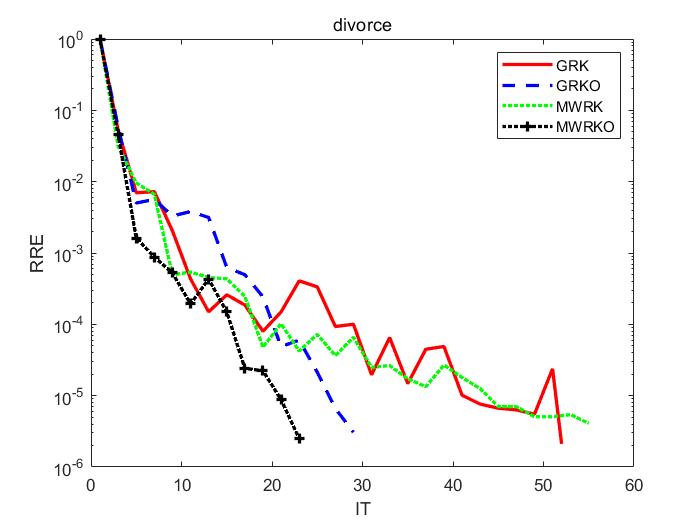}
  }
  \subfigure[]{
    \includegraphics[width=1.5in]{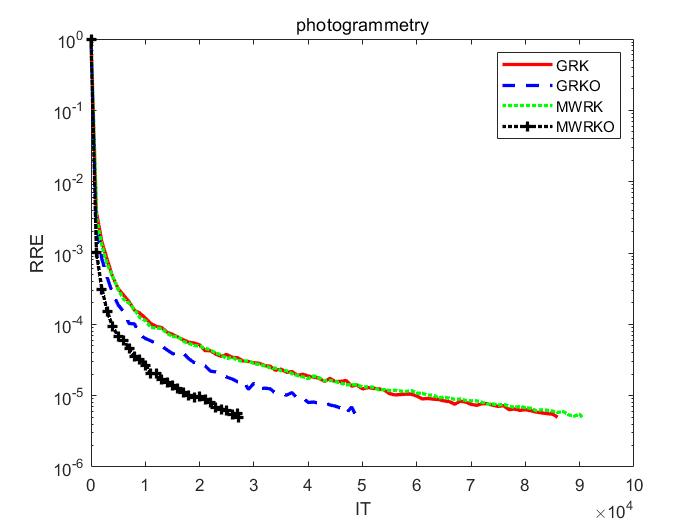}
  }
  \subfigure[]{
    \includegraphics[width=1.5in]{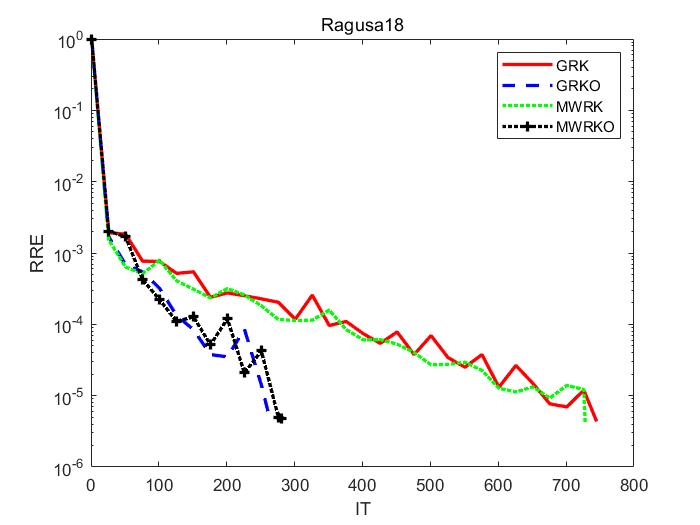}
  }
  \subfigure[]{
    \includegraphics[width=1.5in]{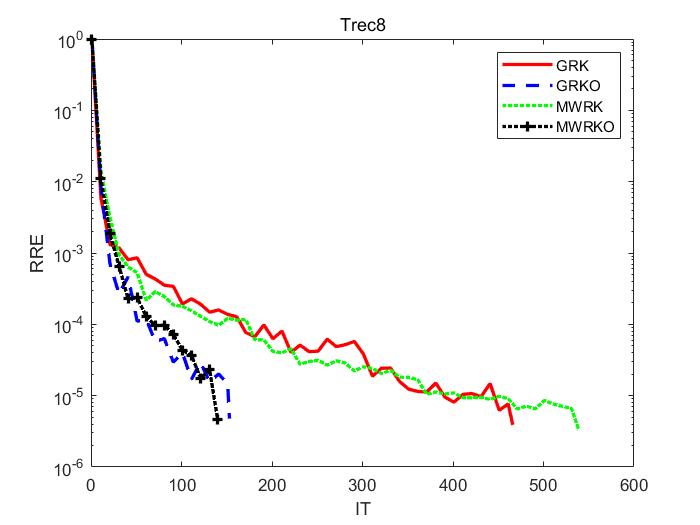}
  }
  \subfigure[]{
    \includegraphics[width=1.5in]{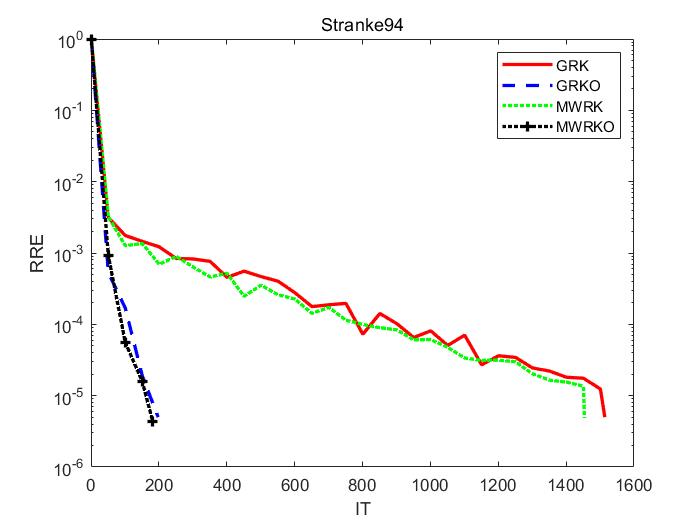}
  }
  \subfigure[]{
    \includegraphics[width=1.5in]{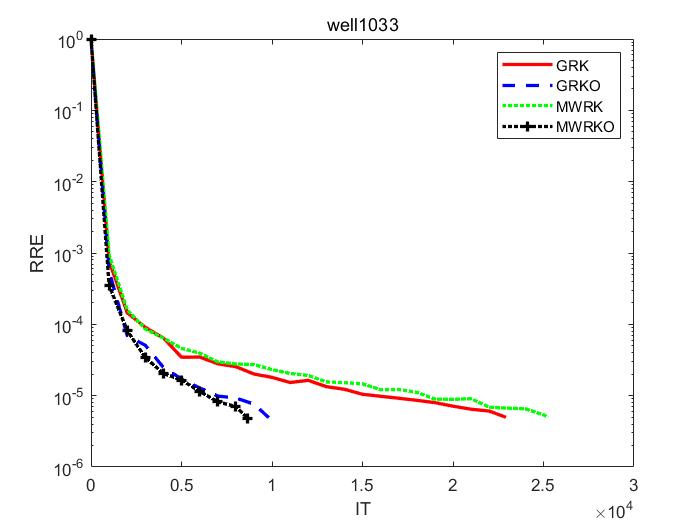}
  }
  \caption{ (a): Convergence history of methods for sparse matrices 'divorce'.  (b): Convergence history of methods for sparse matrices 'photogrammetry'.
   (c): Convergence history of methods for sparse matrices 'Ragusa18'.  (d): Convergence history of methods for sparse matrices 'Trec8'.
   (e): Convergence history of methods for sparse matrices 'Stranke94'.  (f): Convergence history of methods for sparse matrices 'well1033'.  }
\end{figure}
\begin{table}[H]
\label{table6}
\centering
\caption{IT and CPU of GRK, GRKO, MWRK and MWRKO for different sparse matrices }
\renewcommand\arraystretch{0.9}
\begin{tabular}{lllllllll}
\hline
\multicolumn{1}{c}{\multirow{2}{*}{A}} & \multicolumn{4}{c|}{IT}                            & \multicolumn{4}{c}{CPU}           \\ \cline{2-9}
\multicolumn{1}{c}{}                   & GRK   & GRKO  & MWRK  & \multicolumn{1}{l|}{MWRKO} & GRK    & GRKO   & MWRK   & MWRKO  \\ \hline
divorce                                & 51    & 28    & 54    & \multicolumn{1}{l|}{22}    & 0.0053 & 0.0037 & 0.0017 & 0.0013 \\
photogrammetry                         & 85938 & 48933 & 90480 & \multicolumn{1}{l|}{27084} & 9.9917 & 8.0424 & 3.9809 & 2.5026 \\
Ragusa18                               & 744   & 262   & 727   & \multicolumn{1}{l|}{280}   & 0.0577 & 0.0270 & 0.0121 & 0.0098 \\
Trec8                                  & 465   & 152   & 538   & \multicolumn{1}{l|}{139}   & 0.0382 & 0.0168 & 0.0111 & 0.0062 \\
Stranke94                              & 1513  & 197   & 1453  & \multicolumn{1}{l|}{181}   & 0.1291 & 0.0187 & 0.0208 & 0.0082 \\
well1033                               & 22924 & 9825  & 25250 & \multicolumn{1}{l|}{8655}  & 2.4278 & 1.5112 & 0.8491 & 0.5827 \\ \hline
                                       &       &       &       &                            &        &        &        &        \\
                                       &       &       &       &                            &        &        &        &        \\
                                       &       &       &       &                            &        &        &        &
\end{tabular}
\end{table}
\begin{example}\label{EX4.2}\upshape We consider $fancurvedtomo(N,\theta,P)$ test problem from the MATLAB package AIR Tools \cite{HP18}, which generates saprse matrix $A$, an exact solution $x^*$ and $b=Ax^*$. We set $N=60$, $\theta=0:0.5:179.5^{\circ}$, $P=50$, then resulting matrix is of size $32400\times3600$. We test  RRE every $10$ iterations and run these four methods until RRE$<\omega$ is satisfied, where $\omega=0.5\times10^{-5}$.
\end{example}

We first remove the rows of $A$ where the entries are all 0, and perform row unitization processing on $A$ and $b$. We emphasized that this will not cause a change in $x^*$. In Figure 7, we give $60\times60$ images of the exact phantom and the approximate solutions obatined by the GRK, GRKO, MWRK, MWRKO methods. In Figure 7, these four methods can basically restore the original image, but in the subgraph (f) of Figure 7, we can see that the MWRKO methods needs the least iterative steps, and the GRKO method has less iterative steps than GRK method. It can be observed from Table 7 that the MWRKO method is the best in terms of IT and CPU.
\begin{figure}[H]
\label{figure7}
  \centering
  \subfigure[]{
    \includegraphics[width=1.5in]{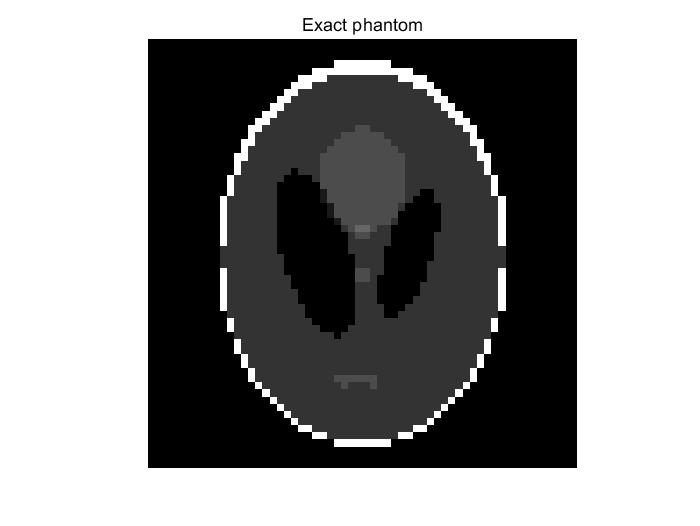}
  }
  \subfigure[]{
    \includegraphics[width=1.5in]{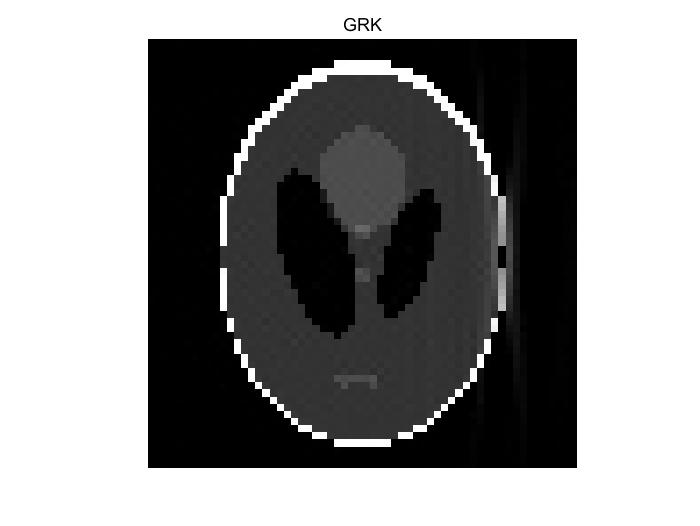}
  }
  \subfigure[]{
    \includegraphics[width=1.5in]{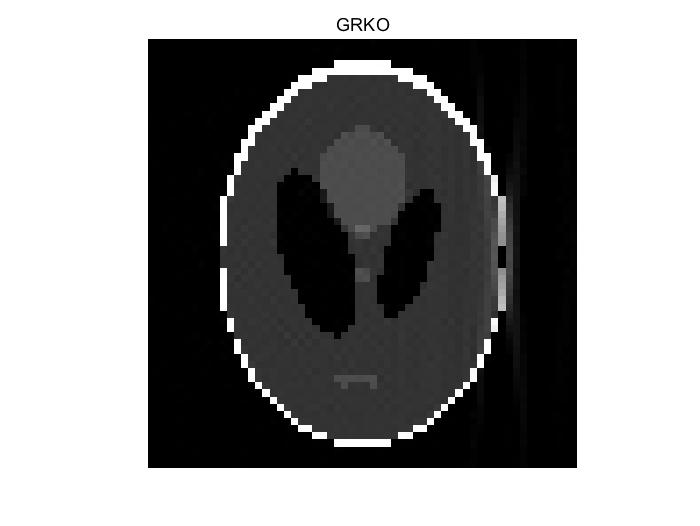}
  }
  \subfigure[]{
    \includegraphics[width=1.5in]{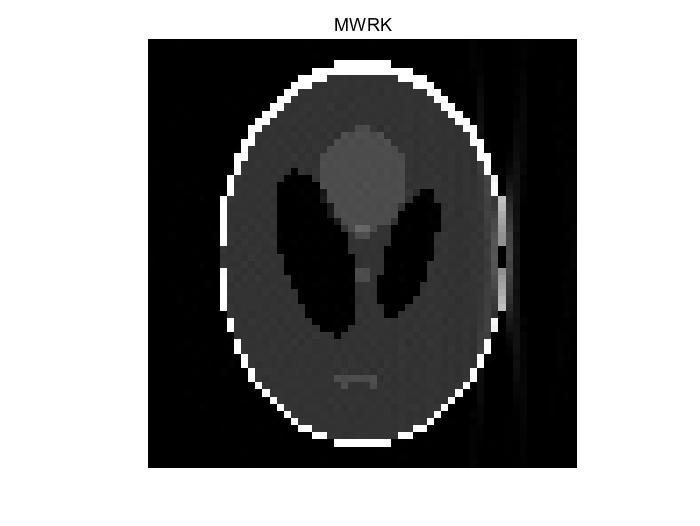}
  }
  \subfigure[]{
    \includegraphics[width=1.5in]{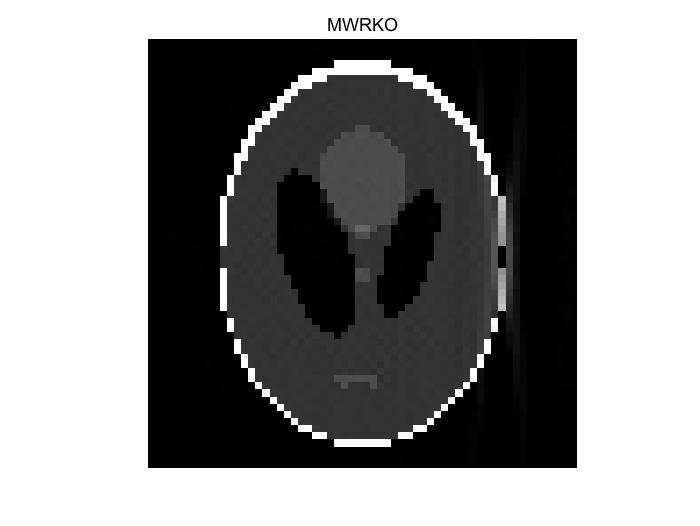}
  }
  \subfigure[]{
    \includegraphics[width=1.5in]{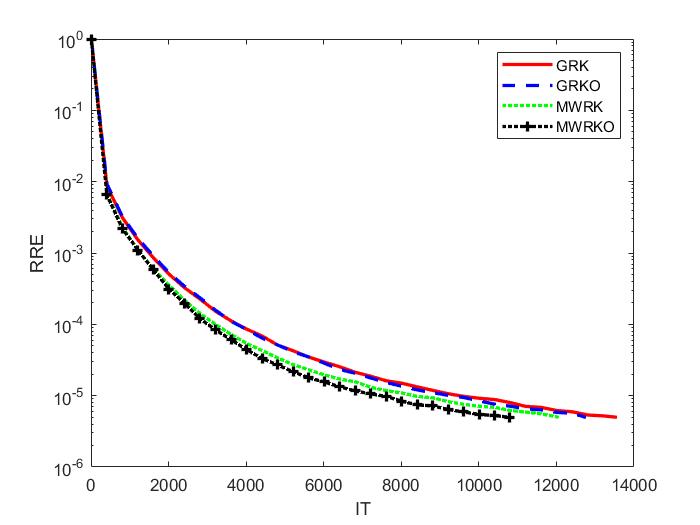}
  }
  \caption{Performance of GRK, GRKO, MERK, MWRKO methods for $fancurvedtomo$ test problem.
   (a):Exact phantom.  (b): GRK.
   (c):GRKO.           (d): MWRK.
   (e):MERKO.          (f): Convergence history of GRK, GRKO, MWRK, MWRKO methods.}
\end{figure}

\begin{table}[H]
\label{table7}
\centering
\caption{IT and CPU of GRK, GRKO, MWRK and MWRKO for $fancurvedtomo$ test problem }
\renewcommand\arraystretch{0.9}
\begin{tabular}{lllllllll}
\cline{1-3}
method & IT    & CPU      &  &  &  &  &  &  \\ \cline{1-3}
GRK    & 13550 &  581.17&  &  &  &  &  &  \\
GRKO   & 12750 & 538.82&  &  &  &  &  &  \\
MWRK   & 12050 & 504.83 &  &  &  &  &  &  \\
MWRKO  & 10790 & 452.86 &  &  &  &  &  &  \\ \cline{1-3}
       &       &          &  &  &  &  &  &  \\
       &       &          &  &  &  &  &  &  \\
       &       &          &  &  &  &  &  &  \\
       &       &          &  &  &  &  &  &  \\
       &       &          &  &  &  &  &  &  \\
       &       &          &  &  &  &  &  &
\end{tabular}
\end{table}
\begin{example}\label{EX4.3}\upshape We use an example from 2D seismic travel-time tomography reconstruction, implemented in the function $seismictomo(N,s,p)$ in the MATLAB package AIR Tools \cite{HP18}, which generates sparse matrix $A$, an exact solution $x^*$ and $b=Ax^*$. We set $N=12$, $s=24$, $p=35$, then resulting matrix is of size $840\times144$. We run these four methods until RRE$<\omega$ is satisfied, where $\omega=0.5\times10^{-5}$.
\end{example}

   We first remove the rows of $A$ where the entries are all 0, and perform row unitization processing on $A$ and $b$. In Figure 8, we give $12\times12$ images of the exact phantom and the approximate solutions obatined by the GRK, GRKO, MWRK, MWRKO methods. From the subgraph (f) of Figure 8 and Table 8, we can see that the MRKO method, the GRKO method, and the MWRK method perform similarly in the number of iteration steps, and are twice as small as the number of iteration steps of the GRK method. It can be observed from Table 8 that MWRKO method is the best in terms of IT and CPU.
\begin{figure}[H]
\label{figure8}
  \centering
  \subfigure[]{
    \includegraphics[width=1.5in]{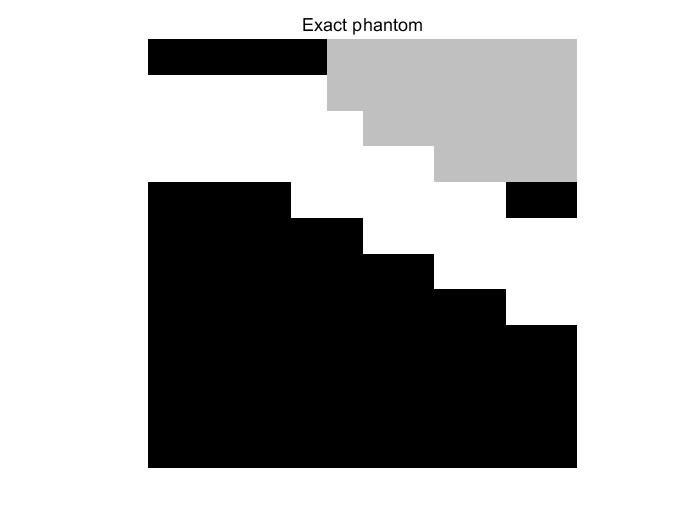}
  }
  \subfigure[]{
    \includegraphics[width=1.5in]{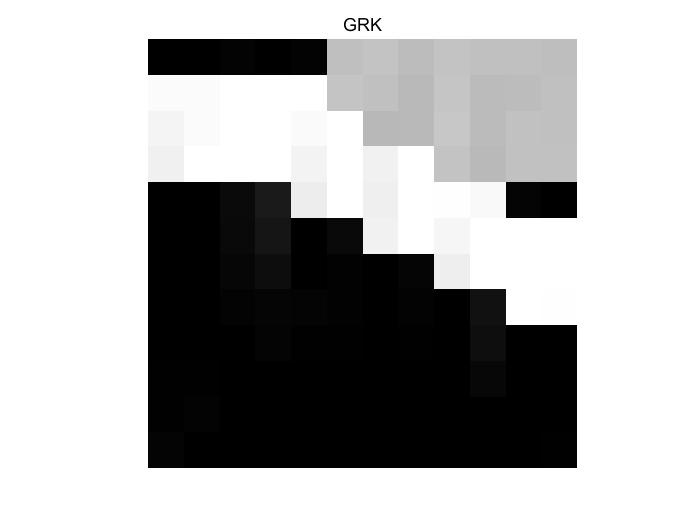}
  }
  \subfigure[]{
    \includegraphics[width=1.5in]{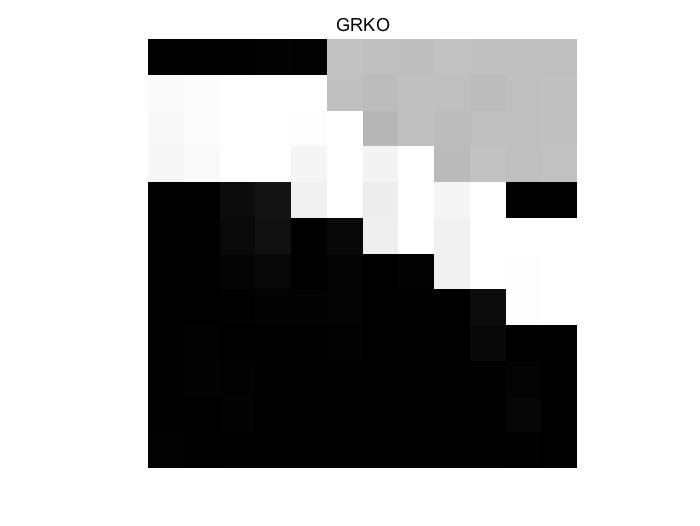}
  }
  \subfigure[]{
    \includegraphics[width=1.5in]{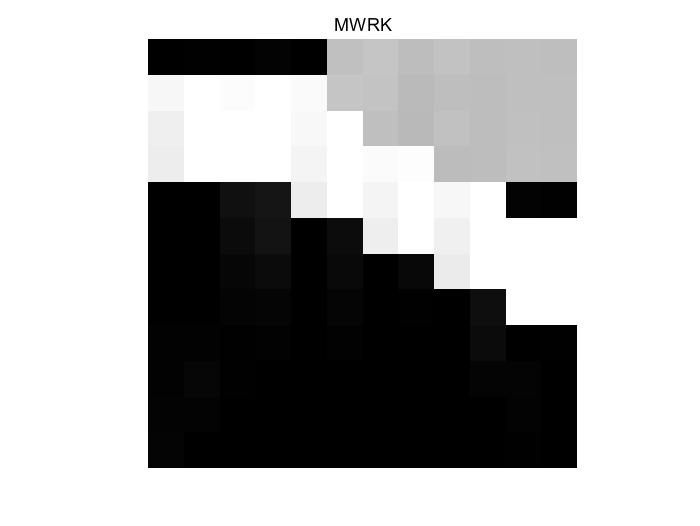}
  }
  \subfigure[]{
    \includegraphics[width=1.5in]{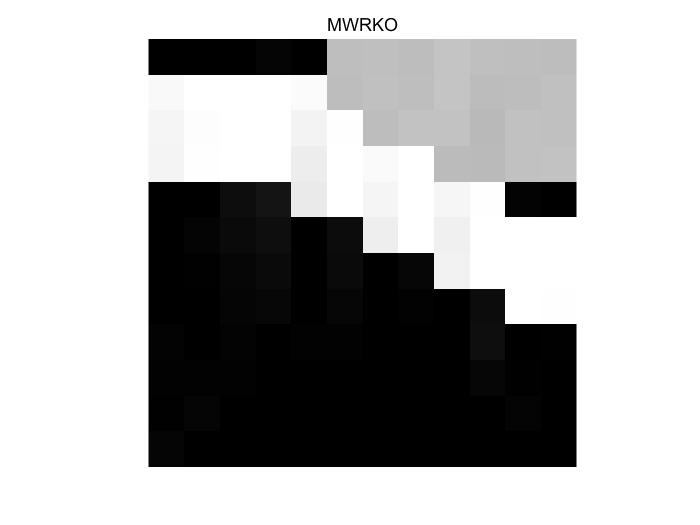}
  }
  \subfigure[]{
    \includegraphics[width=1.5in]{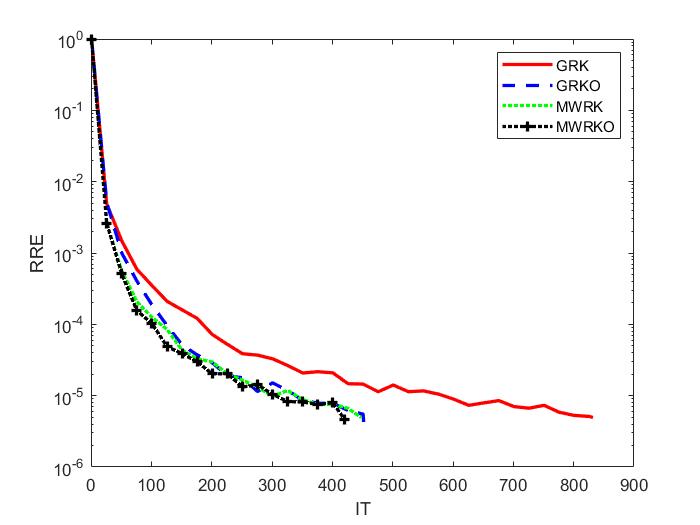}
  }
 \caption{Performance of GRK, GRKO, MERK, MWRKO methods for $seismictomo$ test problem.
   (a):Exact phantom.  (b): GRK.
   (c):GRKO.           (d): MWRK.
   (e):MERKO.          (f): Convergence history of GRK, GRKO, MERK, MWRKO methods.}
\end{figure}
\begin{table}[H]
\label{table8}
\centering
\caption{IT and CPU of GRK, GRKO, MWRK and MWRKO for $seismictomo$ test problem }
\renewcommand\arraystretch{0.9}
\begin{tabular}{lllllllll}
\cline{1-3}
method & IT    & CPU      &  &  &  &  &  &  \\ \cline{1-3}
GRK    & 831 & 0.0443 &  &  &  &  &  &  \\
GRKO   & 452 & 0.0273 &  &  &  &  &  &  \\
MWRK   & 447& 0.0125 &  &  &  &  &  &  \\
MWRKO  & 420 & 0.0108 &  &  &  &  &  &  \\ \cline{1-3}
       &       &          &  &  &  &  &  &  \\
       &       &          &  &  &  &  &  &  \\
       &       &          &  &  &  &  &  &  \\
       &       &          &  &  &  &  &  &  \\
       &       &          &  &  &  &  &  &  \\
       &       &          &  &  &  &  &  &
\end{tabular}
\end{table}
\section{Conclusion}
Combined with the representative randomized and non-randomized row index selection strategies, two Kaczamrz-type methods with oblique projection for solving large-scale consistent linear systems are proposed, namely the GRKO method and the MWRKO method. The exponential convergence of the GRKO method and the MWRKO method are deduced. Theoretical and experimental results show that the convergence rates of the GRKO method and the MWRKO method are better than GRK method and the MWRK method respectively. Numerical experiments show the effectiveness of these two methods, especially when the rows of the coefficient matrix $A$ are close to linear correlation.
\section*{Acknowledgments}
\hspace{1.5em}This work was supported by the Fundamental Research Funds for the Central Universities [grant number 19CX05003A-20], the National Key Research and Development Program of China [grant number 2019YFC1408400], and the Science and Technology Support Plan for Youth Innovation of University in Shandong Province [No.YCX2021151].

\end{document}